\newcommand{\Co}{\mathcal{C}}
\newcommand{\R}{\mathbb{R}}
\newcommand{\ZZ}{\mathbb{Z}}
\newcommand{\KK}{\mathbb{K}}
\newcommand{\NN}{\mathbb{N}}
\newcommand{\F}{\mathbb{F}}
\newcommand{\Gr}{\text{Gr}}
\newcommand{\bv}{\mathbf{v}}
\newcommand{\bx}{\mathbf{x}}
\newcommand{\by}{\mathbf{y}}
\newcommand{\bz}{\mathbf{z}}
\newcommand{\lt}{\text{lt}}
\newcommand{\supp}{\text{supp}}
\newcommand{\Cos}{\text{Co}}
\newcommand{\T}{\mathbf{T}}
\newtheorem{theorem}{Theorem}[section]
\newtheorem{lemma}[theorem]{Lemma}
\newtheorem{proposition}[theorem]{Proposition}
\theoremstyle{definition}
\theoremstyle{remark}
\newtheorem{remark}[theorem]{Remark}
\newenvironment{example}[1][Example \arabic{section}.\arabic{theorem}.]
{\begin{trivlist}\refstepcounter{theorem}
\item[\hskip \labelsep {\bfseries #1}]}{\hfill $\diamondsuit$\end{trivlist}}
\title{Graver Bases and Universal Gröbner Bases for Linear Codes}
\author{Natalia Dück, Karl-Heinz Zimmermann}
\begin{document}
 \maketitle

\begin{abstract}
Two correspondences have been provided that associate any linear code over a finite field with a 
binomial ideal. In this paper, algorithms for computing their Graver bases and universal Gröbner bases are given.
To this end, a connection between these binomial ideals and toric ideals will be established. 
\end{abstract}

\section{Introduction}

Gröbner bases have originally been introduced by Buchberger for the algorithmic solution 
of some fundamental problems in commutative algebra~\cite{buch1} and turned out to be a crucial concept for 
further advance in the field of computer algebra~\cite{adams, becker, cls-app, grp}. 

Linear codes with their additional algebraic properties, on the other hand, form an important subclass of 
error-correcting codes and their relevance is well established in the field of coding theory~\cite{macws, vlint}. 

Recently, it has been emphasized that linear codes over finite fields
can be described by binomial ideals given as a sum of a toric ideal
and a non-prime ideal~\cite{borges,borges2,mahkhz3}. 
In this way, a direct link between the two prospering subjects of linear codes and Gröbner bases has been provided.
In the binary case, this correspondence proved to hold important information about the code like its minimum distance
and its minimal support codewords, thus allowing for a new decoding method~\cite{borges,marmartinez}.
Additionally, it led to new insights into the algebraic structure of linear
codes and allowed the application of slightly modified results from the 
rich theory of toric ideals~\cite{marmartinez,duekhz2}. 
Later, yet another correspondence was given which associates linear codes with binomial ideals and
which solves the complete decoding problem in the non-binary case~\cite{martcodeideal,marquez_corbella}.
Central to all these applications is the computation of  reduced Gröber bases. 
  
In this paper, we will address the problem of computing the Graver basis and the universal Gröbner basis 
for both binomial ideals associated to a linear code. 
In particular, we will extend methods used for accomplishing these tasks for toric
ideals as expounded in~\cite[Chapter 7]{sturmfels}.

The essential ideas stem from~\cite{marmartinez}. However, the method provided here differs from the one proposed
in~\cite{marmartinez} 
(compare~Corollary 4.5 in~\cite{marmartinez} and Prop.~\ref{prop:GraverOrdinary} and~\ref{prop:GraverGen}), where 
also only the modular case is considered.

This paper is organized as follows. 
Section~2 introduces all notions and definitions required later on.
Section~3 shows how both ideals associated to linear codes can be computed from certain toric ideals 
by substitution of variables. Section~4 deals with the computation of Graver bases.
In the final section~5, an algorithm for computing the universal Gröbner basis from the Graver basis 
for the generalized code ideal is given. 
Additionally, a sufficient condition for primitive binomials not belonging to the universal Gröbner basis is provided 
and the special case of characteristic~2 is emphasized.


\section{Preliminaries}

This section will introduce the necessary concepts from commutative algebra and algebraic coding. We assume familiarity with the basic definitions and notions of monomial orders and Gröbner bases as introduced in~\cite{adams,cls}.

\subsection{Toric Ideals, Gröbner Bases and Graver Bases}

Write $\KK[\bx] = \KK[x_1,\ldots,x_n]$ for the commutative polynomial ring in $n$ indeterminates over 
an arbitrary field $\KK$ and denote the \textit{monomials\/} in $\KK[\bx]$ by 
$\bx^{u} = x_1^{u_1}x_2^{u_2}\cdots x_n^{u_n}$, where $u=(u_1,\ldots,u_n)\in\NN_0^n$.

For a given ideal $I\subset\KK[\bx]$ and a monomial order $\succ$ on $\NN_0^n$, we shall denote
the {\em leading ideal\/} of $I$ w.r.t.\ $\succ$ by $\lt_\succ(I)$ and the
 reduced Gröbner basis for $I$ w.r.t.\ $\succ$ by $\mathcal{G}_\succ(I)$.
For a given ideal $I$ only finitely many different reduced Gröbner bases exist, 
and their union is called the \textit{universal Gröbner basis} for $I$ which will be denoted by 
$\mathcal{U}(I)$~\cite{schwartz,sturmfels,weispfennig87}. 

If two different monomial orders $\succ$ and $\succ'$ on $\NN_0^n$ have the same leading ideal: $\lt_\succ(I)=\lt_{\succ'}(I)$, 
then the reduced Gröbner bases are also the same: $\mathcal{G}_\succ(I)=\mathcal{G}_{\succ'}(I)$~\cite{fukJensenFans}. 
This result can be further generalized by introducing the notion of \textit{weight vectors}. 
For any $\omega\in\R^n$ and any polynomial $f=\sum c_i\bx^{u_i}\in\KK[\bx]$, define the \textit{initial form} $\lt_\omega(f)$ of $f$ 
to be the sum of all terms $c_i\bx^{u_i}$ in $f$ such that the inner product $\omega\cdot u_i$ is maximal,
and for an ideal $I$ define its \textit{leading ideal associated to $\omega$} as
\begin{align}
 \lt_\omega(I)=\left\langle \lt_\omega(f)\mid f\in I\right\rangle.
\end{align}
Note that unlike to leading ideals w.r.t.\ a monomial order this ideal is not necessarily generated by monomials.
For a non-negative weight vector $\omega\in\R_{+}^n$ and a monomial order $\succ$ on $\NN_0^n$, the new term order
$\succ_\omega$ is defined by ordering monomials first by their $\omega$-degree and breaking ties using $\succ$, i.e.,
\begin{align}
 \bx^a\succ_\omega\bx^b\quad :\Longleftrightarrow\quad 
a\cdot\omega>b\cdot\omega\:\vee\: (a\cdot\omega=b\cdot\omega\:\wedge\: \bx^a\succ\bx^b).
\end{align}
For any non-negative weight vector $w\in\R_+^n$ and any monomial order $\succ$ on $\NN_0^n$,
$\lt_w(I)=\lt_\succ(I)$ if and only if $\lt_w(g)=\lt_\succ(g)$ for all $g\in\mathcal{G}_\succ(I)$~\cite[Lemma 2.10]{fukJensenFans}

A \textit{binomial} in $\KK[\bx]$ is a polynomial consisting of two terms, i.e., a binomial is
of the form $c_u\bx^u-c_v\bx^v$, where $u,v\in\NN_0^n$ and $c_u,c_v\in\KK$ are non-zero. 
A binomial is \textit{pure} if the involved monomials are relatively prime.
All binomials considered here will be pure and henceforth the prefix pure will be omitted. 
A \textit{binomial ideal} is an ideal generated by binomials.

A binomial $\bx^{u}-\bx^{v}$ in a binomial ideal $I$ is \textit{primitive} if there is no other binomial 
$\bx^{u'}-\bx^{v'}$ in $I$ such that $\bx^{u'}$ divides $\bx^{u}$ and $\bx^{v'}$ divides $\bx^{v}$. The set of all primitive binomials in $I$ is called the {\em Graver basis\/} for $I$ and is denoted by $\Gr(I)$. It is easy to show that the universal Gröbner basis for a binomial ideal $I$ is always a subset of the Graver basis, $\mathcal{U}(I)\subseteq\Gr(I)$.

Toric ideals form a specific class of binomial ideals and can be defined in several ways~\cite{robbiano}. 
One way to introduce them is by means of integer matrices~\cite{sturmfels}.
For an integer $d\times n$ matrix $A$, the \textit{toric ideal} associated to $A$ is defined as
\begin{align}
I_A=\left\langle \bx^u-\bx^v\mid Au=Av,\,u,v\in\NN_0^n\right\rangle.\label{eq:toricrep1}
\end{align}

Note that each vector $u\in\ZZ^n$ can be uniquely written as $u=u^+-u^-$ where $u^+,u^-$ have disjoint support and their entries are non-negative. 
For instance, the vector $u=(1,-2,0)$ splits into $u^+ = (1,0,0)$ and $u^- = (0,2,0)$.
In this way, the toric ideal $I_A$ can be expressed as
\begin{align}
I_A=\left\langle \bx^{u^+}-\bx^{u^-}\mid u\in\ker_\ZZ(A)\right\rangle.\label{eq:toricrep2}
\end{align}

\subsection{Linear Codes and Binomials Ideals}

Let $\F_q$ denote the finite field with $q$ elements where $q$ is a prime power. 
In what follows, whenever we write $q=p^r$, $p$ shall be a prime and $r$ a non-negative integer. 
A \textit{linear code} $\Co$ of length $n$ and dimension $k$ over $\F_q$
is the image of a one-to-one linear mapping from $\F_q^k$ to $\F_q^n$.  
Such a code $\Co$ is called an $[n,k]$ code whose elements are called \textit{codewords},
which are always written as row vectors~\cite{macws,vlint}.

A \textit{generator matrix} for an $[n,k]$ code $\Co$ is a $k\times n$ matrix $G$ over $\F_q$ 
whose rows form a basis for $\Co$ and a \textit{parity check matrix} $H$ is an $(n-k)\times n$ matrix over $\F_q$ 
such that a word $c\in\F_q^n$ belongs to $\Co$ if and only if $cH^T=0$. 

The \textit{support} of a word $u\in\F_q^n$, denoted by $\supp(u)$, is the set of coordinates $i\in \{1,\dots,n\}$ such that $u_i\neq 0$.
\medskip

Let $\Co$ be an $[n,k]$ code over the finite field $\F_q$, where $q=p^r$ is a prime power. We
associate the following two binomial ideals to this code. 

The \textit{ordinary code ideal} associated to the code $\Co$ is an ideal in the polynomial ring 
$\KK[\bx]=\KK[x_{11},\dots,x_{1r},x_{21}\dots,x_{nr}]$ given as a sum of binomial ideals~\cite{borges2,mahkhz3},
\begin{align}
 I(\Co)=I'(\Co)+I_p,
\end{align}
where 
\begin{align}
I'(\Co) = \langle \bx^c-\bx^{c'}\mid c-c'\in\Co \rangle
\end{align}
and
\begin{align}
I_p=\left\langle x_{ij}^p-1\mid 1\leq i\leq n,\:1\leq j\leq r\right\rangle.
\end{align}
Note that the components of the word $c\in\F_q^n$ in the exponent of the monomial $\bx^c$ 
are replaced by their canonical integer representations using 
the vector space isomorphism between $\F_q$ and $\F_p^r$.

The binomial $\bx^u-\bx^{u'}$ in the code ideal is said to {\em correspond\/} to the codeword $u-u'$. 
In contrast to the integral case, however, different binomials may correspond to the same codeword.
For example, the word $(1,1,0)$ in $\F_2^3$ can be written as $(1,1,0)=(0,1,0)-(1,0,0)$ 
or $(1,1,0)=(1,0,0)-(0,1,0)$. 

In order to define the other binomial ideal associated to $\Co$, 
let $\alpha$ be a primitive element of $\F_q$ and define the {\em crossing map\/} 
$$\blacktriangle:\F_q^n\rightarrow\ZZ^{n(q-1)}$$ by 
\begin{align*}
\mathbf{a}=(a_1,\dots,a_n) = (\alpha^{j_1},\dots,\alpha^{j_n})\mapsto(\mathbf{e}_{j_1},\dots,\mathbf{e}_{j_n}),
\end{align*}
where $\mathbf{e}_i$ is the $i$th unit vector of length $q-1$, $1\leq i\leq q-1$, and 
each zero coordinate is mapped to the zero vector of length $q-1$. 
For instance, consider the field $\F_4 =\{0,\alpha,\alpha^2=\alpha+1,\alpha^3=1\}$ and $n=2$.
The crossing map $\blacktriangle:\F_q^2 \rightarrow \ZZ^{6}$ assigns $(\alpha,1)$ to $100001$,
$(0,0)$ to $000000$, and $(\alpha^2,0)$ to $010000$.

The associated mapping $$\blacktriangledown:\ZZ^{n(q-1)}\rightarrow \F_q^n$$ is given as
\begin{align*}
 (j_{1,1},\dots,j_{1,q-1},j_{2,1},\dots,j_{n,q-1})\mapsto 
\left(\sum_{i=1}^{q-1}j_{1,i}\alpha^i,\dots,\sum_{i=1}^{q-1}j_{n,i}\alpha^i\right).
\end{align*}

The \textit{generalized code ideal} associated to the code $\Co$ is an ideal in the larger polynomial ring
$\KK[\bx]=\KK[\bx_1,\dots,\bx_n]$, where $\bx_j=(x_{j1},x_{j2},\dots,x_{j,q-1})$ for $1\leq j\leq n$, given as~\cite{martcodeideal}
\begin{align}
 I_+(\Co)=\left\langle\bx^{\blacktriangle a}-\bx^{\blacktriangle b}\mid a-b\in\Co\right\rangle
\label{eq:otherIc}.
\end{align}

A generating set for the code ideal $I_+(\Co)$ will contain both 
a generating set of the associated linear code as well as their scalar multiples
and an encoding of the additive structure of the field $\F_q$~\cite{martcodeideal,mahkhzX}.
The latter can be given by the ideal $I_q$ in $\KK[\bx]$ generated by the set
\begin{eqnarray}
\bigcup_{i=1}^n \left( \left\{ x_{iu}x_{iv}-x_{iw}\mid\alpha^u+\alpha^v=\alpha^w\right\}
	  \cup \left\{ x_{iu}x_{iv}-1\mid\alpha^u+\alpha^v=0 \right\}\right). \label{eq-relationsI}
\end{eqnarray}

We introduce the following shorthand notations: We write $\mathcal{U}(\Co)=\mathcal{U}\left(I(\Co)\right)$, respectively $\mathcal{U}_+(\Co)=\mathcal{U}\left(I_+(\Co)\right)$, for the universal Gröbner basis and $\Gr(\Co)=\Gr\left(I(\Co)\right)$, respectively $\Gr_+(\Co)=\Gr\left(I_+(\Co)\right)$, for the Graver basis.

\section{Deriving code ideals from toric ideals}

In this section, the generalized code ideal $I_+(\Co)$ will be related to a toric ideal.
Such a connection has already been established for the ordinary code ideal $I(\Co)$ in the case of a 
prime field~\cite[Remark 1 and Proposition 3.1]{marmartinez}. 
To see this, define for any prime number $p$ and any $m\times n$ matrix $A$ over $\F_p$ 
the extended $m\times (n+m)$ integer matrix
\begin{align}
 A(p)=\left(\begin{array}{c|c}\triangle A & pI_m\end{array}\right)
\end{align}
where $\triangle A$ is an $m\times n$ integer matrix such that $A=\triangle A\otimes_\ZZ\F_p$. 

\begin{proposition}{\cite[Remark 1 and Proposition 3.1]{marmartinez}}\label{prop:codeidealToric}
The code ideal $I(\Co)$ associated to an $[n,k]$ code $\Co$ over $\F_p$ with parity check matrix $H$ can be expressed as
\begin{align}
 I(\Co)=\left\{f(\bx,\mathbf{1})\mid f\in I_{H(p)}\right\}\subset\KK[\bx],
\end{align}
where $\mathbf{1}$ is the all-$1$ vector of length $n-k$ and 
$I_{H(p)}$ is the toric ideal in $\KK[\bx,\by]=\KK[x_1,\ldots,x_n,y_1,\ldots,y_{n-k}]$ associated to the integer matrix $H(p)$.
\end{proposition}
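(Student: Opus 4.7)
The strategy is to observe that the substitution $y_1,\dots,y_{n-k}\mapsto 1$ defines a surjective ring homomorphism $\phi:\KK[\bx,\by]\to\KK[\bx]$, so the right-hand side $\phi(I_{H(p)})$ is automatically an ideal of $\KK[\bx]$. It then suffices to prove the two inclusions $\phi(I_{H(p)})\subseteq I(\Co)$ and $I(\Co)\subseteq\phi(I_{H(p)})$ by checking generators on each side.

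For $\phi(I_{H(p)})\subseteq I(\Co)$, use the representation (\ref{eq:toricrep2}) of $I_{H(p)}$: every generator has the form $\bx^{u^+}\by^{v^+}-\bx^{u^-}\by^{v^-}$ with $(u,v)\in\ker_\ZZ(H(p))$, i.e.\ $\triangle H\cdot u=-pv$. Reducing modulo $p$ shows $H\,\bar u=0$ in $\F_p^m$, so $\bar u\in\Co$. Let $a,b\in\{0,\dots,p-1\}^n$ be the componentwise reductions of $u^+,u^-$ modulo $p$; then $a-b$ represents the codeword $\bar u$, so $\bx^a-\bx^b\in I'(\Co)$. A short telescoping argument that factors $x_i^{u_i^+}-x_i^{a_i}$ through $x_i^p-1$ shows $\bx^{u^+}-\bx^a$ and $\bx^{u^-}-\bx^b$ both lie in $I_p$. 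Combining, the image $\bx^{u^+}-\bx^{u^-}$ lies in $I'(\Co)+I_p=I(\Co)$.

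For $I(\Co)\subseteq\phi(I_{H(p)})$, handle the two families of generators of $I(\Co)$ separately. Given $x_i^p-1$, take $u=pe_i$ and $v=-\triangle H\cdot e_i$; these satisfy $\triangle H u+pv=0$, so $(u,v)\in\ker_\ZZ H(p)$, and the corresponding toric binomial $x_i^p\,\by^{v^+}-\by^{v^-}\in I_{H(p)}$ is sent by $\phi$ to $x_i^p-1$. Given $\bx^c-\bx^{c'}$ with $c-c'\in\Co$ (using the canonical representatives in $\{0,\dots,p-1\}^n$), the congruence $H(c-c')\equiv 0\pmod p$ provides $v\in\ZZ^m$ with $\triangle H(c-c')+pv=0$, so $(c-c',v)\in\ker_\ZZ H(p)$. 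The toric binomial $g=\bx^{(c-c')^+}\by^{v^+}-\bx^{(c-c')^-}\by^{v^-}\in I_{H(p)}$; since $(c-c')^+=c-\min(c,c')$ and $(c-c')^-=c'-\min(c,c')$, multiplying by $\bx^{\min(c,c')}$ yields $\bx^c\by^{v^+}-\bx^{c'}\by^{v^-}\in I_{H(p)}$, whose image under $\phi$ is the desired $\bx^c-\bx^{c'}$.

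The main obstacle is simply bookkeeping: passing smoothly between the signed exponent vectors $u^{\pm}$ coming from the toric representation and the canonical $\{0,\dots,p-1\}^n$ representatives used in $I'(\Co)$. The key organizational point is that $I_p$ is exactly what is needed to bridge the two, and that multiplying a toric binomial by the monomial $\bx^{\min(c,c')}$ promotes it from $(c-c')^\pm$-exponents to $c,c'$-exponents without leaving $I_{H(p)}$.
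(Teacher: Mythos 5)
Your proof is correct. Note that the paper does not actually prove this proposition --- it imports it from the cited reference and only proves the generalization (Prop.~\ref{prop:ordICtoric}), and there the argument is a terse equivalence: restrict to binomials, then $\bx^a\by^{a'}-\bx^b\by^{b'}\in I_{H(q)}$ iff $\triangle H_e(a-b)^{\T}\equiv\mathbf{0}\bmod p$ iff the reduction of $a-b$ lies in $\ker(H_e)$, ``the result follows.'' Your double-inclusion argument rests on the same mathematical core (membership in the toric ideal is the kernel condition modulo $p$, which is exactly the codeword condition), but it makes explicit two points that the paper's sketch leaves implicit: first, why it suffices to work with binomial generators at all, which you justify cleanly via the surjective substitution homomorphism $\phi$ so that $\phi(I_{H(p)})$ is an ideal generated by the images of the generators; second, the bookkeeping between arbitrary non-negative exponent vectors $u^{\pm}$ on the toric side and the canonical $\{0,\dots,p-1\}$ representatives defining $I'(\Co)$, which you bridge with the telescoping factorization through $x_i^p-1$, i.e.\ through $I_p$, and conversely by multiplying the toric binomial by $\bx^{\min(c,c')}$ and by exhibiting the explicit kernel element $(pe_i,-\triangle H e_i)$ for the generators $x_i^p-1$. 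So the route is the same in spirit, but your version supplies the verification the paper omits; the only stylistic quibble is the shorthand $H(c-c')\equiv 0\pmod p$, which should read $\triangle H(c-c')\equiv 0\pmod p$ for the integer lift, as you clearly intend.
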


This result can be extended to linear codes over any finite field as follows. 
Consider the finite field $\F_q$ with $q=p^r$ and 
recall that for a fixed basis $B=\left\{b_1,\dots,b_r\right\}$
and for any $0\leq s\leq p-1$ the monomial $x_{ij}^s$ represents the element $sb_j$ at the the $i$th position.
For a matrix $H\in\F_q^{m\times n}$ with row vectors $h_1,\dots,h_m$ construct the matrix
$$H'=\begin{pmatrix} b_1h_1\\\vdots\\b_rh_1\\\vdots\\b_1h_m\\\vdots\\b_rh_m\end{pmatrix}\in\F_q^{rm\times n}$$
that consists of all multiplications of the row vectors with elements from the basis $B$.
Replace then each entry by its vector representation in $\F_p^r$ in order to obtain
the extended matrix $H_e\in\F_p^{mr\times nr}.$ Finally, define the $mr\times nr+mr$ integer matrix $H(q)$ to be
\begin{align}
 H(q)=\left(\begin{array}{c|c} \triangle H_e&pI_{mr}
            \end{array}\right),\label{eq:Hqordinary}
\end{align}
where $\triangle H_e$ is an $mr\times nr$ integer matrix such that $\triangle H_e\otimes_\ZZ \F_p=H_e$.

\begin{example}
 Consider the following $2\times 4$ matrix over the finite field $\F_4$ with the four elements
$\left\{0,1,\alpha,\alpha^2=\alpha+1\right\}$,
\begin{align*}
 H=\begin{pmatrix}\alpha&0&1&0\\\alpha^2&\alpha&0&1
   \end{pmatrix}.
\end{align*}
First, we construct for the fixed basis $\{1,\alpha\}$
\begin{align*}
 H'=\begin{pmatrix}\alpha&0&1&0\\\alpha^2&0&\alpha&0\\\alpha^2&\alpha&0&1\\1&\alpha^2&0&\alpha
   \end{pmatrix}
\end{align*}
and then using the isomorphism defined by $1\mapsto (1,0)$ and $\alpha\mapsto (0,1)$ we construct
\begin{align*}
 H_e=\begin{pmatrix}0&1&0&0&1&0&0&0\\1&1&0&0&0&1&0&0\\1&1&0&1&0&0&1&0\\1&0&1&1&0&0&0&1
   \end{pmatrix}.
\end{align*}
This gives us the integer matrix $H(4)=\left(\triangle H_e\left|\: 2I_{4}\right.\right)$.
\end{example}

\begin{proposition}\label{prop:ordICtoric}
 The (ordinary) code ideal $I(\Co)$ associated to the $[n,k]$ code $\Co$ over the field $\F_q$ with parity
check matrix $H\in\F_q^{n-k\times n}$ can be expressed as
\begin{align}
 I(\Co)=\left\{f(\bx,\mathbf{1})\left|f\in I_{H(q)}\right.\right\},
\end{align}
where $\mathbf{1}$ is the all-one vector of length $(n-k)r$ and $I_{H(q)}$ is the toric ideal in the ring
$\KK[\bx,\by]=\KK[x_{11},\dots,x_{nr},y_{1},\dots,y_{(n-k)r}]$ associated to the integer matrix $H(q)$ defined
according to~(\ref{eq:Hqordinary}).
\end{proposition}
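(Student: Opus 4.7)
The plan is to adapt the argument behind Proposition~\ref{prop:codeidealToric} to arbitrary finite fields $\F_q=\F_{p^r}$. First, I would introduce the $\KK$-algebra homomorphism $\pi:\KK[\bx,\by]\to\KK[\bx]$ defined by $\pi(y_l)=1$ and $\pi(x_{ij})=x_{ij}$, and then prove $\pi(I_{H(q)})=I(\Co)$ by establishing both inclusions.

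For the inclusion $\pi(I_{H(q)})\subseteq I(\Co)$, I would use representation~\eqref{eq:toricrep2} to write each binomial of $I_{H(q)}$ in the form $\bx^{w^+}\by^{s^+}-\bx^{w^-}\by^{s^-}$ with $(w,s)\in\ker_\ZZ H(q)$, i.e., $\triangle H_e\,w+p\,s=0$, equivalently $H_e w\equiv 0\pmod p$. After applying $\pi$ the $\by$-factors collapse to $1$, leaving $\bx^{w^+}-\bx^{w^-}$. At this point I would invoke the following key lemma: for any $w\in\ZZ^{nr}$, the congruence $H_e w\equiv 0\pmod p$ holds if and only if the word $c_w\in\F_q^n$ with components $(c_w)_k=\sum_l(w_{kl}\bmod p)\,b_l$ lies in $\Co$. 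Combined with the generators of $I_p$, which allow replacement of each exponent by its residue modulo $p$, this reduces $\bx^{w^+}-\bx^{w^-}$ to a generator of $I'(\Co)$ and thus places it in $I'(\Co)+I_p=I(\Co)$.

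For the converse $I(\Co)\subseteq\pi(I_{H(q)})$, I would check the two families of generators of $I(\Co)$. A generator $x_{ij}^p-1$ of $I_p$ is the $\pi$-image of a toric binomial coming from the kernel vector whose $\bx$-block is $p\,\mathbf{e}_{(i,j)}$ and whose $\by$-block is the negative of the $(i,j)$-th column of $\triangle H_e$. A generator $\bx^c-\bx^{c'}$ of $I'(\Co)$ with $c-c'\in\Co$ lifts similarly: the key lemma in the converse direction guarantees $\triangle H_e(c-c')\equiv 0\pmod p$, and dividing by $p$ yields the compensating $\by$-exponent that produces a preimage in $I_{H(q)}$.

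The main obstacle is the key lemma. To establish it, I would unpack the two-step construction of $H_e$: the rows of the intermediate matrix $H'$ are the $\F_q^n$-vectors $b_j h_i$, and since every nonzero $b_j$ is a unit in $\F_q$, the $\F_q$-system $\{b_j h_i\cdot c=0\}_{i,j}$ is equivalent to $Hc=0$; expanding each $\F_q$-entry of $H'$ as its coordinate vector in the basis $B$ should then translate these identities into an $\F_p$-linear system whose coefficient matrix is exactly $H_e$. The delicate point is confirming that this coordinate expansion produces precisely $mr$ independent $\F_p$-conditions whose kernel agrees with the image of $\Co$ in $\F_p^{nr}$, so that no information is lost or gained in passing from $H'$ to $H_e$; this compatibility is the technical heart of the argument and is where the specific design of the construction enters.
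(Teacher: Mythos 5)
Your skeleton is essentially the paper's: reduce everything to binomials via the kernel description of the toric ideal $I_{H(q)}$, so that the whole statement rests on the correspondence between $\ker_{\F_p}(H_e)$ and $\Co=\ker_{\F_q}(H)$ under the coordinate identification $\F_q^n\cong\F_p^{nr}$ given by $B$; your generator-level double inclusion, the lift of $x_{ij}^p-1$ via the kernel vector $(p\,\mathbf{e}_{(i,j)},-\triangle H_e\mathbf{e}_{(i,j)})$, and the lift of $\bx^c-\bx^{c'}$ (up to the harmless monomial factor $\bx^{\min(c,c')}$, absorbed because the substitution image is an ideal) are all fine. The genuine gap is exactly the ``key lemma'' you defer. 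Your sketch claims that expanding the entries of $H'$ in the basis $B$ ``should'' translate the $\F_q$-equations $b_jh_i\cdot c=0$ into the system $H_e w=0$, but it does not: row $(i,j)$ of $H_ew=0$ reads $\sum_{k,l}\pi_l(b_jh_{ik})\,\pi_l(c_k)=0$, a coordinatewise dot product of $B$-coordinate vectors, whereas an $\F_p$-linearization of $b_jh_i\cdot c=0$ would require the coordinates of the products $h_{ik}c_k$, i.e.\ the multiplication matrices (equivalently, expansion of the entries of $H'$ in the trace-dual basis $B^*$). Writing $\sum_l\pi_l(x)\pi_l(y)=\mathrm{Tr}\bigl(x\,\sigma(y)\bigr)$, where $\sigma$ is the $\F_p$-linear map sending $b_l$ to its trace-dual $b_l^*$, one sees that $\ker(H_e)$ corresponds to $\sigma^{-1}(\Co)$, which equals $\Co$ precisely when the dot product has the form $L(xy)$ for a nonzero functional $L$, equivalently when $B^*=\lambda B$ for some $\lambda\in\F_q^\times$. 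This holds for the bases used in the paper's examples, but not for an arbitrary basis.

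Concretely, take $\F_9=\F_3[\alpha]$ with $\alpha^2=-1$, $B=\{1,\alpha\}$ and $H=(1\ \ \alpha)$. Then
\begin{align*}
H'=\begin{pmatrix}1&\alpha\\ \alpha&2\end{pmatrix},\qquad
H_e=\begin{pmatrix}1&0&0&1\\ 0&1&2&0\end{pmatrix},
\end{align*}
and the codeword $c=(2\alpha,1)\in\Co$ has coordinate vector $w=(0,2,1,0)$ with $H_ew^\T=(0,1)^\T\neq\mathbf{0}$, so $w\notin\ker(H_e)$ although $c\in\Co$. Hence the kernel correspondence you plan to prove is false for a general choice of $B$ as the construction is literally written, and the ``delicate point'' you flag is not merely a verification but the place where an additional hypothesis or a fix is needed. (To be fair, the paper's own proof asserts at this very spot that ``$\ker(H_e)$ and $\ker(H)$ are isomorphic by the isomorphism $\F_p^r\cong\F_q$'' without argument.) To close the gap you must either prove the lemma via the trace-form argument under an explicit assumption on $B$ (trace-dual basis equal to $\lambda B$, which covers the paper's examples and in particular self-dual bases), or replace the expansion step by expansion in the dual basis $B^*$; as written, your proposed justification would fail rather than merely being incomplete.
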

\begin{proof}
 It is sufficient to consider only binomials. 
Since $I_{H(q)}$ is a toric ideal the following equivalency holds 
for any $a,b\in\ZZ^{nr}$ and $a',b'\in\ZZ^{(n-k)r}$,
\begin{align*}
 \bx^a\by^{a'}-\bx^{b}\by^{b'}\in I_{H(q)}\quad&\Longleftrightarrow\quad 
		      \triangle H_e (a-b)^\T\equiv\mathbf{0}\mod p.
\end{align*}
Because $\triangle H_e$ is such that $\triangle H_e\otimes_\ZZ \F_p=H_e$, we easily see that identifying $a-b$ with
its image under the mapping 
$\ZZ^{nr}\ni a-b\mapsto a-b\mod p\in\F_p^{nr}$ belongs to $\ker(H_e)$ if and only if 
$\bx^a\by^{a'}-\bx^{b}\by^{b'}\in I_{H(q)}$. Note that the kernels $\ker(H_e)$ and $\ker(H)$ are isomorphic by the
isomorphism $\F_p^r\cong\F_q$. The result follows. 
\end{proof}


A similar result holds for generalized code ideals. 

In the following, let $\alpha$ be the fixed primitive element for the finite field $\F_q$. 
Fix a basis $B=\left\{b_1,\dots,b_r\right\}$ for $\F_q$ as an $\F_p$-vector space and denote for each $1\leq i\leq r$
by $\pi_i:\F_q\rightarrow\F_p$ the projection 
$$\F_q\ni a=\sum_{j=1}^{r} a_jb_j\mapsto a_i\in\F_p.$$

For a matrix $H=\left(h_{ij}\right)\in\F_q^{m\times n}$ with entries $h_{ij}$, $1\leq i\leq m$ and $1\leq j\leq n$,
construct first the extended matrix $H'\in\F_q^{m\times n(q-1)}$ by multiplying each entry
with $\alpha^s$ for each $1\leq s\leq q-1$ and arranging them line-by-line,
\begin{align*}
 H'=\begin{pmatrix}
      \ddots&&&&&\\
      &\alpha h_{ij}&\alpha^2 h_{ij}&\dots&\alpha^{q-1} h_{ij}&\\
      &&&&&\ddots
    \end{pmatrix}.
\end{align*}
Apply then to each entry the projections $\pi_i$ for all $1\leq i\leq r$ and arrange them column by column 
in order to obtain the extended matrix $H_{+,e}\in\F_p^{mr\times n(q-1)}$,
\begin{align*}
 H_{+,e}=\begin{pmatrix}
     \ddots&&&&&\\
   &\pi_1\left(\alpha h_{ij}\right)&\pi_1\left(\alpha^2 h_{ij}\right)&\dots&\pi_1\left(\alpha^{q-1} h_{ij}\right)&\\
   &\pi_2\left(\alpha h_{ij}\right)&\pi_2\left(\alpha^2 h_{ij}\right)&\dots&\pi_2\left(\alpha^{q-1} h_{ij}\right)&\\
   &\vdots&\vdots&&\vdots&\\
   &\pi_r\left(\alpha h_{ij}\right)&\pi_r\left(\alpha^2 h_{ij}\right)&\dots&\pi_r\left(\alpha^{q-1} h_{ij}\right)&\\     
&&&&&\ddots
    \end{pmatrix}.
\end{align*}
Finally, define the integer $mr\times n(q-1)+mr$ matrix $H_+(q)$ to be
\begin{align}
 H_+(q)&=\left(\begin{array}{c | c}
                       \triangle H_{+,e} & p I_{rm}
                      \end{array}\right)\label{eq:Hq+},
\end{align}
where $\triangle H_{+,e}$ is an $mr\times n(q-1)$ integer matrix such that $\triangle H_{+,e}\otimes_\ZZ\F_p=H_{+,e}$.

\begin{example}
Consider the following $2\times 3$ matrix over the finite field $\F_9$ with the nine elements 
$\left\{0,\alpha,\alpha^2,\dots,\alpha^8\right\}$, where $\alpha$ is a primitive element 
satisfying $\alpha^2+\alpha+2=0$,
\begin{align*}
 H=\begin{pmatrix}\alpha^2&\alpha&0 \\ 0&0&\alpha^6 \end{pmatrix}.
\end{align*}
First, we construct the matrix
\begin{align*}
 H'=\left(\begin{array}{c|c|c}
     H_1'&H_2'& H_3'
    \end{array}\right)
\end{align*}
with
\begin{align*}
 H_1'&=\begin{pmatrix}
       \alpha^3&\alpha^4&\alpha^5&\alpha^6&\alpha^7&\alpha^8&\alpha&\alpha^2\\
       0&0&0&0&0&0&0&0
      \end{pmatrix},\\
 H_2'&=\begin{pmatrix}
       \alpha^2&\alpha^3&\alpha^4&\alpha^5&\alpha^6&\alpha^7&\alpha^8&\alpha^1\\
       0&0&0&0&0&0&0&0
      \end{pmatrix},\\
 H_3'&=\begin{pmatrix}
	0&0&0&0&0&0&0&0\\
       \alpha^7&\alpha^8&\alpha&\alpha^2&\alpha^3&\alpha^4&\alpha^5&\alpha^6\\
      \end{pmatrix}.
\end{align*}
Next we fix the $\F_3$-vector space basis $\left\{1,\alpha\right\}$ for $\F_9$ and compute
the Table~\ref{tab:exampleproj}
\begin{table}[h]
\begin{tabular}{c|cccccccc}
 $x$&$\alpha$&$\alpha^2$&$\alpha^3$&$\alpha^4$&$\alpha^5$&$\alpha^6$&$\alpha^7$&$\alpha^8$\\ \hline
 $\pi_1(x)$&$0$&$1$&$2$&$2$&$0$&$2$&$1$&$1$\\
 $\pi_2(x)$&$1$&$2$&$2$&$0$&$2$&$1$&$1$&$0$
\end{tabular}
\centering
\caption{Evaluation of $\pi_1$ and $\pi_2$ for the field $F_9$}
\label{tab:exampleproj}
\end{table}
in order to obtain
\begin{align*}
 H_{+,e}=\left(\begin{array}{c|c|c}
            H_{1,+,e}&H_{2,+,e}&H_{3,+,e}
           \end{array}\right),
\end{align*}
where 
\begin{align*}
H_{1,+,e}&=\left(\begin{array}{cccccccc}2&2&0&2&1&1&0&1\\
				      2&0&2&1&1&0&1&2\\
				      0&0&0&0&0&0&0&0\\
				      0&0&0&0&0&0&0&0\end{array}\right),\\
H_{2,+,e}&=\left(\begin{array}{cccccccc}1&2&2&0&2&1&1&0\\
				      2&2&0&2&1&1&0&1\\
				      0&0&0&0&0&0&0&0\\
				      0&0&0&0&0&0&0&0\end{array}\right),\\
H_{3,+,e}&=\left(\begin{array}{cccccccc}0&0&0&0&0&0&0&0\\
				      0&0&0&0&0&0&0&0\\
				      1&1&0&1&2&2&0&2\\
				      1&0&1&2&2&0&2&1\end{array}\right).
\end{align*}
This gives us the integer matrix $H_+(9)=\left(\triangle H_e\left|\: 3I_{4}\right.\right)$.
\end{example}

\begin{proposition}\label{prop:genICtoric2} 
The generalized code ideal $I_+(\Co)$ associated to the $[n,k]$ code $\Co$ over the field $\F_q$ with $q=p^r$ 
and with parity check matrix $H$ can be expressed as
 \begin{align}
  I_+(\Co)=\left\{f(\bx,\mathbf{1})\mid f\in I_{H_+(q)}\right\},
 \end{align}
where $\mathbf{1}$ is the all-one vector of length $(n-k)r$ and $I_{H_+(q)}$ is the toric ideal in the ring
$\KK[\bx,\by]=\KK[x_{11},\dots,x_{n,q-1},y_{1},\dots,y_{(n-k)r}]$ associated to the integer matrix $H_+(q)$ defined
according to~(\ref{eq:Hq+}).
\end{proposition}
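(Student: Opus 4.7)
The plan is to adapt the proof of Proposition~\ref{prop:ordICtoric} by replacing the direct identification $\F_p^r\cong\F_q$ with the crossing maps $\blacktriangle$ and $\blacktriangledown$. Since both $I_+(\Co)$ and $I_{H_+(q)}$ are binomial, it suffices to verify the equivalence on binomials $\bx^a\by^{a'}-\bx^b\by^{b'}$. By the characterisation~(\ref{eq:toricrep2}), membership in $I_{H_+(q)}$ is equivalent to $\triangle H_{+,e}(a-b)^{\T}+p\,(a'-b')^{\T}=0$; after substituting $\by=\mathbf{1}$ the surviving content is the mod-$p$ congruence $\triangle H_{+,e}(a-b)^{\T}\equiv 0\pmod p$, and any solution of this congruence lifts to an honest toric binomial by choosing $a'$ and $b'$ to absorb the multiple of $p$.

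The central computation is to recognise this congruence as the condition $\blacktriangledown(a-b)\in\Co$. By construction, the entry of $H_{+,e}$ indexed by row $(i,k)$ and column $(j,s)$ equals $\pi_k(\alpha^s h_{ij})$. Expanding the matrix–vector product then shows that the $(i,k)$-component of $H_{+,e}v$ equals $\pi_k\bigl((H\cdot\blacktriangledown(v))_i\bigr)$ for every $v\in\F_p^{n(q-1)}$. Since the projections $\pi_1,\dots,\pi_r$ separate the elements of $\F_q$, the vector $H_{+,e}v$ vanishes in $\F_p^{mr}$ if and only if $H\cdot\blacktriangledown(v)=0$ in $\F_q^m$, that is, if and only if $\blacktriangledown(v)\in\Co$. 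This step is the direct analogue of the kernel identification $\ker(H_e)\cong\ker(H)$ used in the ordinary case.

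It then remains to match binomials $\bx^a-\bx^b$ satisfying $\blacktriangledown(a-b\bmod p)\in\Co$ with the elements of $I_+(\Co)$. The forward direction is immediate on the generators $\bx^{\blacktriangle a}-\bx^{\blacktriangle b}$, since $\blacktriangledown\circ\blacktriangle$ is the identity on $\F_q^n$, and the property is preserved by the polynomial operations that produce arbitrary ideal elements. The converse is the main obstacle and plays the role that $I_p$ plays in Proposition~\ref{prop:ordICtoric}: one has to show that an exponent difference $v=a-b$ with $\blacktriangledown(v)\in\Co$ can always be reduced, modulo the field relations $I_q\subseteq I_+(\Co)$, to a standard crossing form $\blacktriangle a'-\blacktriangle b'$ with $a'-b'\in\Co$. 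I expect this to proceed block-by-block on the $n$ coordinates, using the identities $\alpha^u+\alpha^v=\alpha^w$ (or $\alpha^u+\alpha^v=0$) encoded in the generators~(\ref{eq-relationsI}) of $I_q$ to collapse each block $(v_{j,1},\dots,v_{j,q-1})$ to a single standard basis vector $\blacktriangle(\blacktriangledown(v))_j$; at that point the binomial is manifestly of the generating form. Chaining these equivalences with the first paragraph then completes the argument.
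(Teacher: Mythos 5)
Your proposal is correct and follows essentially the same route as the paper: the heart of both arguments is identifying the condition $\triangle H_{+,e}(a-b)^{\T}\equiv \mathbf{0}\pmod p$ with $H\cdot\blacktriangledown(a-b)=0$ (hence $\blacktriangledown(a-b)\in\Co$) via the $\F_p$-linear projections $\pi_1,\dots,\pi_r$, which is exactly the paper's computation with Eqs.~(\ref{eq:aux2}) and~(\ref{eq:aux3}). Your third paragraph, reducing an arbitrary binomial with $\blacktriangledown(a-b)\in\Co$ modulo $I_q\subseteq I_+(\Co)$ to a generator $\bx^{\blacktriangle a'}-\bx^{\blacktriangle b'}$, merely makes explicit a step the paper's proof leaves implicit, and it goes through as you sketch it.
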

\begin{proof}
It is sufficient to restrict to binomials. 

Let $a,b\in\ZZ^{n(q-1)}$ and write 
$$a-b=(c_{11},\dots,c_{1,q-1},c_{21},\dots,c_{2,q-1},\dots,c_{n,q-1})=(\mathbf{c}_1,\dots,\mathbf{c}_n).$$
For some $a',b'\in\ZZ^{rm}$ holds
\begin{align*}
 \bx^a\by^{a'}-\bx^b\by^{b'}\in I_{H_+(q)}\quad\Longleftrightarrow\quad 
				  \triangle H_{+,e}(\mathbf{c}_1,\dots,\mathbf{c}_n)^\T\equiv\mathbf{0}\mod p.
\end{align*}
Identify the $c_{ij}$'s with their images under the canonical mapping $\ZZ\rightarrow\F_p$.
The equality $\triangle H_{+,e} (\mathbf{c}_1,\dots,\mathbf{c}_n)^\T\equiv\mathbf{0}\mod p$ is true if and only if 
for all $0\leq s\leq r-1$ and $1\leq i\leq n-k$ holds
\begin{align}
\sum_{j=1}^{n} \left(\pi_s\left(\alpha h_{ij}\right)c_{j1}+\ldots+ 
\pi_s \left(\alpha^{q-1}h_{ij}\right)c_{j,q-1}\right)  = 0  
\quad\mbox{over }\F_p.\label{eq:aux2}
\end{align}
On the other hand, we have $H\blacktriangledown(\mathbf{c}_1,\dots,\mathbf{c}_n)^\T=\mathbf{0}$ if and only if
for all $1\leq i\leq n-k$,
\begin{align}
 0=\sum_{j=1}^{n}h_{ij}\left(\sum_{\ell=1}^{q-1}c_{j\ell}\alpha^\ell\right)
=\sum_{j=1}^n \left( \left(h_{ij}\alpha\right)c_{j1}+\cdots+\left(h_{ij}\alpha^{q-1}\right)c_{j,q-1}\right)\quad\mbox{ over }\F_q.
\label{eq:aux3}
\end{align}
Another equivalent formulation for this equation is in $\F_p$ via the projections $\pi_s$, $1\leq s\leq r$.
To be more precise, Eq.~(\ref{eq:aux3}) is true if and only if for all $1\leq s\leq r$ and $1\leq i\leq n-k$
Eq.~(\ref{eq:aux2}) is true.
\end{proof}

\begin{example}\label{ex:F4}
Take the $[3,2]$ code $\Co$ over $\F_4$ with parity check matrix
\begin{align*}
 H&=\begin{pmatrix}\alpha&\alpha^3&\alpha^2
   \end{pmatrix},
\end{align*}
where $\alpha$ is a primitive element satisfying $\alpha^2+\alpha+1=0$.

Each entry $h_{1j}$, $1\leq j\leq 3$, in the parity check matrix is replaced by the row vector $(h_{1j}\alpha),h_{1j}(\alpha^2),h_{1j}(\alpha^3)$ 
and the entries are expanded according to the $\F_2$-basis $\{\alpha,1\}$ of $\F_4$:
\begin{align*}
 &\left(\begin{array}{c|c|c}
\alpha^2\:\:\alpha^3\:\:\alpha & \alpha\:\:\alpha^2\:\:\alpha^3 & \alpha^3\:\:\alpha\:\:\alpha^2
\end{array}
\right)
=\left(\begin{array}{c|c|c}
\alpha+1 \:\: 1 \:\:\alpha & \alpha\:\:\alpha+1\:\: 1 & 1\:\:\alpha\:\: \alpha+1
\end{array}
\right).
\end{align*}
By projection, this gives the matrix
\begin{align*}
 H_+(4)=\left(\begin{array}{ccccccccc|cc}
1&0&1 & 1&1&0 & 0&1&1 & 2&0\\
1&1&0 & 0&1&1 & 1&0&1 & 0&2
\end{array}
\right).
\end{align*}
We compute the reduced Gröbner basis for the toric ideal $I_{H_+(4)}$ w.r.t.\ the lexicographic ordering 
to consist of the binomials 
\begin{align*}
\begin{array}{lll}
x_{11}-x_{33},   & x_{12}-x_{31},  & x_{13}-x_{32},\\
x_{21}-x_{32},   & x_{22}-x_{33},  & x_{23}-x_{31},\\
x_{31}^2-y_2,	 & x_{31}x_{32}-x_{33}, & x_{31}x_{33}-x_{32}y_2,\\
x_{31}y_1-x_{32}x_{33}, & x_{32}^2-y_1,  & x_{33}^2-y_1y_2.
\end{array}
\end{align*}
The substitution $\by\mapsto\mathbf{1}$ and a further Gröber basis computation lead to the 
following set which is easily seen to be the reduced Gröbner basis for the generalized code ideal $I_+(\Co)$,
\begin{align*}
\left\{\begin{array}{lllll}
x_{11}-x_{33},   & x_{12}-x_{32}x_{33}, & x_{13}-x_{32},       & x_{21}-x_{32},& x_{22}-x_{33}, \\
x_{23}-x_{32}x_{33}, & x_{31}-x_{32}x_{33}, & x_{32}^2-1, & x_{33}^2-1
\end{array}\right\}.
\end{align*}
\end{example}


\section{Computing the Graver basis}

In~\cite[Remark 3]{marmartinez} it has been pointed out that the Graver basis for the (ordinary) code ideal
associated to a linear code over a finite prime field can be computed as an elimination ideal of the $\ZZ$-kernel 
of the matrix
\begin{align}
 \begin{pmatrix}\triangle H&\mathbf{0}&p I_m\\I_n&I_n&\mathbf{0}
    \end{pmatrix}\in\ZZ^{(m+n)\times(2n+m)},\label{eq-auxmatrix}
\end{align}
where $\triangle H$ is such that $\triangle H\otimes_\ZZ\F_p=:H$ is a parity check matrix for the corresponding code.
 
Based on this, a uniform method for computing the Graver basis for the ordinary and the 
generalized code ideal is developed which makes use of the connection between both code ideals and toric ideals
established in the last section. 

\subsection{Generalization of the Lawrence Lifiting}

For each $m\times n$ integer matrix $\triangle H$, let $H=\triangle H\otimes_\ZZ\F_p$ and 
define the \textit{$p$-Lawrence lifting} of $\triangle H$ 
as the $(m+n)\times(2n+m)$ integer matrix
\begin{align}
  \Lambda(H)_p=\begin{pmatrix}\triangle H&\mathbf{0}&p I_m\\I_n&I_n&\mathbf{0}
    \end{pmatrix}. \label{eq:ordinpLawrence}
\end{align}
Consider the toric ideal $I_{\Lambda(H)_p}$ in the ring $\KK[\bx,\by,\bz]$ where $\bx=(x_1,\dots,x_n)$, 
$\by=(y_1,\dots,y_n)$ and $\bz=(z_1,\dots,z_m)$,
and define the ideal $I_{\Lambda(H)}$ in $\KK[\bx,\by]$ as
\begin{align}
 I_{\Lambda(H)}=\left\{g(\bx,\by,\mathbf{1})\left| g\in I_{\Lambda(H)_p}\right.\right\}.\label{eq:elimZ}
\end{align}

\begin{proposition}\label{prop:kerH}
 The ideal $I_{\Lambda(H)}$ is a binomial ideal and all pure binomials in $I_{\Lambda(H)}$ are
of the form $\bx^{u}\by^{v}-\bx^{v}\by^{u}$, where $u-v\in\ker(H)$.
\end{proposition}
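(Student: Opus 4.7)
The plan is to read off a generating set for $I_{\Lambda(H)}$ directly from the kernel description of the toric ideal $I_{\Lambda(H)_p}$, and then pin down any pure binomial in $I_{\Lambda(H)}$ via a single toric parametrization.

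A vector $(u,v,w)\in\ZZ^{2n+m}$ lies in $\ker_\ZZ(\Lambda(H)_p)$ iff $u+v=0$ and $\triangle H\,u+p\,w=0$, i.e., $v=-u$ and $u\in L:=\{u\in\ZZ^n:\triangle H\,u\in p\ZZ^m\}$ with $w$ then forced. By the kernel description~(\ref{eq:toricrep2}), $I_{\Lambda(H)_p}$ is generated by the binomials $\bx^{u^+}\by^{u^-}\bz^{w^+}-\bx^{u^-}\by^{u^+}\bz^{w^-}$ for $u\in L$ (using $(-u)^+=u^-$ and $(-u)^-=u^+$). Substituting $\bz=\mathbf{1}$ then produces generators of $I_{\Lambda(H)}$ of the form $\bx^{u^+}\by^{u^-}-\bx^{u^-}\by^{u^+}$ with $u\in L$; in particular $I_{\Lambda(H)}$ is a binomial ideal and every binomial of the claimed form lies in it.

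For the converse I would introduce the $\KK$-algebra homomorphism
\begin{align*}
\varphi:\KK[\bx,\by]\longrightarrow \KK[s_1,\dots,s_m,t_1^{\pm 1},\dots,t_n^{\pm 1}]/(s_1^p-1,\dots,s_m^p-1),\quad x_i\mapsto s^{\triangle H_i}t_i,\quad y_i\mapsto t_i.
\end{align*}
A direct computation using $\triangle H\,u\in p\ZZ^m$ (so $s^{\triangle H u^+}=s^{\triangle H u^-}$ in the target) shows $\varphi$ vanishes on each generator of $I_{\Lambda(H)}$, hence $I_{\Lambda(H)}\subseteq\ker\varphi$. Given a pure binomial $f=\bx^a\by^b-\bx^c\by^d\in I_{\Lambda(H)}$, the equation $\varphi(f)=0$ reads $s^{\triangle Ha}\,t^{a+b}=s^{\triangle Hc}\,t^{c+d}$; comparing in the $\KK$-basis $\{s^\alpha t^\beta:\alpha\in\F_p^m,\beta\in\ZZ^n\}$ of the target simultaneously forces $a+b=c+d$ and $\triangle H(a-c)\in p\ZZ^m$. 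Purity of $f$ means $\min(a_i,c_i)=\min(b_i,d_i)=0$ for every $i$; combined with $a+b=c+d$, a routine coordinate-wise case split then yields $a=d$ and $b=c$, and hence $f=\bx^u\by^v-\bx^v\by^u$ for $u=a$, $v=b$ with $u-v=a-c\in\ker H$.

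The main obstacle is designing the parametrization $\varphi$ so that the single equation $\varphi(f)=0$ simultaneously encodes both the \emph{pairing} between the two monomials of $f$ (via the $t$-exponents, corresponding to an $\NN^n$-degree) and the \emph{kernel condition} $a-b\in\ker H$ (via the $s$-exponents read modulo $p$). Once the target is taken to be the tensor of a Laurent polynomial ring with the group algebra $\KK[\F_p^m]$, both constraints drop out of comparing the unique monomial expansions on the two sides of $\varphi(f)=0$, avoiding a separate verification that $I_{\Lambda(H)}$ contains no monomial.
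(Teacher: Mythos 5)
Your proof is correct, and its second half takes a genuinely different route from the paper's. The first half agrees with the paper: a binomial generating set of the toric ideal $I_{\Lambda(H)_p}$ maps under $\bz\mapsto\mathbf{1}$ onto a generating set of $I_{\Lambda(H)}$, and your explicit computation of $\ker_\ZZ\left(\Lambda(H)_p\right)$ (namely $v=-u$, $\triangle H u\in p\ZZ^m$, $w$ forced) identifies those generators as $\bx^{u^+}\by^{u^-}-\bx^{u^-}\by^{u^+}$. For the classification of pure binomials, the paper argues by lifting back to the toric ideal: it asserts the equivalence that $\bx^{u^+}\by^{v^+}-\bx^{u^-}\by^{v^-}\in I_{\Lambda(H)}$ if and only if some $(u^+-u^-,v^+-v^-,c)$ lies in $\ker\left(\Lambda(H)_p\right)$, and then reads off $u^+=v^-$, $u^-=v^+$ together with the kernel condition; the delicate direction of that equivalence (that a binomial of the substituted ideal admits a binomial preimage in $I_{\Lambda(H)_p}$ with matching $\bx,\by$-part) is stated without justification. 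You avoid the lifting entirely: you bound $I_{\Lambda(H)}$ from above by the kernel of the explicit monomial map $\varphi$ into $\KK[s,t^{\pm 1}]/(s_1^p-1,\dots,s_m^p-1)$, check vanishing on the generators, and then use that the elements $s^\alpha t^\beta$ with $\alpha\in\{0,\dots,p-1\}^m$, $\beta\in\ZZ^n$ form a $\KK$-basis of the target, so that $\varphi(f)=0$ for a binomial $f=\bx^a\by^b-\bx^c\by^d$ forces $a+b=c+d$ and $\triangle H(a-c)\in p\ZZ^m$; the coordinate-wise case split from purity then correctly yields $a=d$, $b=c$. This character/lattice-ideal argument supplies exactly the justification the paper's first equivalence glosses over (and it is robust, since it proves the shape statement for the possibly larger ideal $\ker\varphi$), at the cost of introducing an auxiliary ring with a torsion part encoding the mod-$p$ kernel condition; the paper's version is shorter and stays entirely within the Lawrence-lifting formalism, but leans on an unproved lifting step. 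Note also that the statement itself only requires the direction you prove in full, so your side remark that every binomial of the claimed form lies in $I_{\Lambda(H)}$ is harmless extra content.
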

\begin{proof}
Let $\{g_1,\dots,g_k\}$ be a generating set for $I_{\Lambda(H)_p}$. 
Then by definition, $\{g_1',\dots,g_k'\}$, where $g_i'(\bx,\by)=g_i(\bx,\by,\mathbf{1})$ for $1\leq i\leq k$, is a generating set for $I_{\Lambda(H)}$. 
Since $I_{\Lambda(H)_p}$ is generated by binomials, so is $I_{\Lambda(H)}$.

In view of the second assertion, consider a binomial $\bx^{u^+}\by^{v^+}-\bx^{u^-}\by^{v^-}$ in $\KK[\bx,\by]$. 
Then the following holds:
\begin{align*}
 \bx^{u^+}\by^{v^+}\!-\!\bx^{u^-}\by^{v^-}\in I_{\Lambda(H)}\,
  &\Leftrightarrow\, \exists c\in\ZZ^m:\, (u^+\!-\!u^-,v^+\!-\!v^-,c)\in\ker\left(\Lambda(H)_p\right)\\
					       &\Leftrightarrow\, u^+-u^-\in\ker_{\ZZ_p}(H)\,\wedge\,u^+-u^-=v^--v^+\\
					       &\Leftrightarrow\, u^+-u^-\in\ker_{\ZZ_p}(H)\,\wedge\,  u^+=v^-\wedge u^-=v^+.
\end{align*}
This gives the result.
\end{proof}

\begin{proposition}\label{prop:coincide}
For each binomial ideal $I$ in $\KK[\bx,\by]$ in which every binomial is of the form $\bx^a\by^b-\bx^b\by^a$,
the Graver basis, the universal Gröbner basis and every reduced Gröbner basis coincide.
\end{proposition}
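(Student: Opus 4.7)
The inclusions $\mathcal{G}_\succ(I)\subseteq\mathcal{U}(I)\subseteq\Gr(I)$ hold in general (the first by definition of the universal Gröbner basis; the second was recalled before Proposition~\ref{prop:kerH}). Consequently, to prove the three sets coincide it suffices to establish the reverse inclusion
\begin{align*}
\Gr(I)\subseteq\mathcal{G}_\succ(I)\qquad\text{for every monomial order $\succ$ on }\KK[\bx,\by].
\end{align*}
So the whole argument reduces to the following claim: every primitive binomial $f=\bx^a\by^b-\bx^b\by^a\in I$ already lies in the reduced Gröbner basis for any term order~$\succ$.

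\textbf{Key observation.} Any binomial in $I$ has, by hypothesis, the Lawrence shape $\bx^{a'}\by^{b'}-\bx^{b'}\by^{a'}$, and for such a binomial
\begin{align*}
\bx^{a'}\by^{b'}\text{ divides }\bx^a\by^b \ \Longleftrightarrow\  a'\le a\ \wedge\ b'\le b \ \Longleftrightarrow\ \bx^{b'}\by^{a'}\text{ divides }\bx^b\by^a,
\end{align*}
i.e.\ divisibility of \emph{one} monomial of the Lawrence binomial $g$ by $\lt_\succ(g)$ automatically forces divisibility in the primitive sense. This single fact is the engine of the proof, and I expect it to be the cleanest route past what would otherwise be the main nuisance, namely juggling the two monomials of each binomial simultaneously.

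\textbf{Carrying it out.} Fix a term order $\succ$ and $f=\bx^a\by^b-\bx^b\by^a\in\Gr(I)$, writing $\lt_\succ(f)=\bx^a\by^b$ without loss of generality. Recall that the reduced Gröbner basis of a binomial ideal consists of binomials; by hypothesis each of these is of Lawrence type. First I show that $\bx^a\by^b$ is a \emph{minimal} generator of $\lt_\succ(I)$: if not, some $g=\bx^{a'}\by^{b'}-\bx^{b'}\by^{a'}\in\mathcal{G}_\succ(I)$ would have $\lt_\succ(g)=\bx^{a'}\by^{b'}$ properly dividing $\bx^a\by^b$. By the key observation $\bx^{b'}\by^{a'}$ also divides $\bx^b\by^a$, so $g$ divides $f$ primitively and $g\neq f$, contradicting primitivity of $f$. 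Hence $\mathcal{G}_\succ(I)$ contains exactly one element with leading monomial $\bx^a\by^b$, and that element, being a Lawrence binomial with this specified leading term, must be $\bx^a\by^b-\bx^b\by^a=f$ up to the monic normalisation.

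\textbf{Reducedness check and conclusion.} It remains to verify that the trailing monomial $\bx^b\by^a$ of $f$ is not divisible by $\lt_\succ(g)$ for any $g\in\mathcal{G}_\succ(I)\setminus\{f\}$; but the key observation again turns this into primitive divisibility of $g$ into $f$, which primitivity of $f$ forbids. Thus $f\in\mathcal{G}_\succ(I)$, giving $\Gr(I)\subseteq\mathcal{G}_\succ(I)$ for every $\succ$, and all three sets coincide. The only genuinely delicate point is the reliance on the fact that a reduced Gröbner basis of a binomial ideal is again binomial; everything else is an easy exponent comparison made routine by the Lawrence symmetry.
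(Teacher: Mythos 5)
Your proof is correct and turns on exactly the same key observation as the paper's own argument, namely the Lawrence symmetry of divisibility ($\bx^{a'}\by^{b'}\mid\bx^{a}\by^{b}$ iff $\bx^{b'}\by^{a'}\mid\bx^{b}\by^{a}$) played against primitivity, so it is essentially the same approach. The only difference is packaging: the paper shows the Graver basis as a whole satisfies the reducedness conditions (it is a Gröbner basis because it contains $\mathcal{U}(I)$) and then invokes uniqueness of the reduced Gröbner basis, whereas you argue element-wise that each primitive binomial's leading term is a minimal generator of the leading ideal and is therefore matched by the corresponding reduced-basis element, which is why you additionally need the standard fact that reduced Gröbner bases of binomial ideals consist of binomials — a fact the paper only uses implicitly through $\mathcal{U}(I)\subseteq\Gr(I)$.
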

\begin{proof}
The Graver basis is a Gröbner basis w.r.t.\ any monomial order since it contains the universal Gröbner basis.
Claim that it is also the reduced Gröbner basis w.r.t.\ an arbitrary monomial order.
Indeed, suppose there are binomials $\bx^a\by^b-\bx^b\by^a$ and $\bx^c\by^d-\bx^d\by^c$ in $\Gr(I)$, 
where $\bx^a\by^b$ and $\bx^c\by^d$ are the respective leading terms.  
If $\bx^a\by^b$ divides $\bx^c\by^d$, then $\bx^b\by^a$ will divide $\bx^d\by^c$ contradictory 
to $\bx^a\by^b-\bx^b\by^a$ being primitive. 
By the same argument the non-leading term in a primitive binomial is not divisible by 
the leading term of another primitive binomial.  This proves the claim.
 
Since $\mathcal{G}_\succ(I)=\Gr(I)$, 
the inclusions $\mathcal{G}_\succ(I)\subseteq\mathcal{U}(I)\subseteq\Gr(I)$ are in fact equalities
and the result follows.
\end{proof}

\subsection{Application to Code Ideals}

In what follows, let $\Co$ be an $[n,k]$ code $\Co$ over the finite field $\F_q$ with $q=p^r$ being a prime number
and fix a $\F_p$-vector space basis $B=\left\{b_1,\dots,b_r\right\}$ for the field $\F_q$ as well as a primitive
element $\alpha$.

First, we consider ordinary code ideals. For this, 
let $\bx$ denote the variables $x_{11},\dots,x_{nr}$ and let $\by$ denote the variables $y_{11},\dots,y_{nr}$.
Let $\triangle H_e$ be the integer $(n-k)r\times nr$ matrix defined according to~(\ref{eq:Hqordinary}) and
constructed from the parity check matrix $H$ of the code $\Co$. Let $\Lambda(H_e)_p$ be the
$p$-Lawrence lifting of $\triangle H_e$ and let $I_{\Lambda(H_e)}$ be the ideal obtained from the toric ideal
associated to the matrix $\Lambda(H_e)_p$ by substituting $\bz\mapsto\mathbf{1}$ according to Eq.~(\ref{eq:elimZ}).

\begin{proposition}\label{prop:GraverOrdinary}
Let $\mathcal{G}$ be the reduced Gröbner basis for the ideal $I_{\Lambda(H_e)}$ w.r.t.\ any monomial order.
The Graver basis for the ordinary code ideal
$I(\Co)$ associated to the code $\Co$ is given by the set
\begin{align}
  \Gr\left(\Co\right)
      =\left\{ \bx^{u}-\bx^{v}\left|\bx^{u}\by^{v}-\bx^{v}\by^{u}\in\mathcal{G}\right.\right\}.
\end{align}
\end{proposition}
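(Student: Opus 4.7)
The plan is to leverage the symmetric structure of the $p$-Lawrence lifting. Proposition~\ref{prop:kerH} tells us that every binomial in $I_{\Lambda(H_e)}$ has the form $\bx^u \by^v - \bx^v \by^u$ with $u - v \in \ker(H_e)$, and Proposition~\ref{prop:coincide} applied to $I_{\Lambda(H_e)}$ yields $\mathcal{G} = \Gr(I_{\Lambda(H_e)})$, independent of the monomial order chosen. The claim therefore reduces to showing that the map $\phi \colon \bx^u \by^v - \bx^v \by^u \mapsto \bx^u - \bx^v$ restricts to a bijection from $\Gr(I_{\Lambda(H_e)})$ onto $\Gr(\Co)$.

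First I would verify that $\phi$ sends binomials of $I_{\Lambda(H_e)}$ into $I(\Co)$. From $u - v \in \ker(H_e)$ together with the isomorphism $\F_p^r \cong \F_q$ used in constructing $H_e$ from $H$ (the same correspondence invoked in the proof of Proposition~\ref{prop:ordICtoric}), the reduction of $u - v$ modulo $p$ represents a codeword of $\Co$, so $\bx^{u \bmod p} - \bx^{v \bmod p} \in I'(\Co)$. The relations $x_{ij}^p - 1 \in I_p$ then give $\bx^u - \bx^{u \bmod p} \in I_p \subseteq I(\Co)$ (and likewise for $v$), so $\bx^u - \bx^v \in I(\Co)$. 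Conversely, any pure binomial $\bx^u - \bx^v \in I(\Co)$ forces $u - v \in \ker(H_e)$ by the same equivalence, so it arises as the $\phi$-image of a Lawrence-form binomial in $I_{\Lambda(H_e)}$.

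The primitivity correspondence is the crux. For the forward direction, if $\bx^u \by^v - \bx^v \by^u \in \Gr(I_{\Lambda(H_e)})$ and some binomial $\bx^{u'} - \bx^{v'} \in I(\Co)$ different from $\bx^u - \bx^v$ satisfies $\bx^{u'} \mid \bx^u$ and $\bx^{v'} \mid \bx^v$, then by the preceding paragraph $\bx^{u'} \by^{v'} - \bx^{v'} \by^{u'}$ lies in $I_{\Lambda(H_e)}$ and properly divides the original Lawrence-form binomial, contradicting its primitivity. The converse runs symmetrically: any Lawrence-form binomial dividing the lift of a primitive $\bx^u - \bx^v \in \Gr(\Co)$ projects under $\phi$ to a binomial in $I(\Co)$ that divides $\bx^u - \bx^v$, so primitivity in $I(\Co)$ forces coincidence.

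The main subtlety I expect is the unordered nature of the pairs defining a binomial. When checking divisibility of $\bx^{u'} \by^{v'} - \bx^{v'} \by^{u'}$ against $\bx^u \by^v - \bx^v \by^u$ one has a ``parallel'' pairing $(u' \leq u,\; v' \leq v)$ and an ``anti-parallel'' pairing $(u' \leq v,\; v' \leq u)$ to consider. The disjoint-support condition on pure binomials, combined with the fact that $u$ and $v$ themselves have disjoint support, should collapse the anti-parallel case to the same binomial up to sign, so only the parallel case genuinely contributes. Once this bookkeeping is in place, the bijection $\phi$ and the proposition follow directly from Propositions~\ref{prop:kerH} and~\ref{prop:coincide}.
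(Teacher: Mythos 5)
Your proposal is correct and follows essentially the same route as the paper: Proposition~\ref{prop:kerH} together with Proposition~\ref{prop:ordICtoric} gives the correspondence between (primitive) binomials of $I_{\Lambda(H_e)}$ and of $I(\Co)$, and Proposition~\ref{prop:coincide} identifies $\mathcal{G}$ with $\Gr\left(I_{\Lambda(H_e)}\right)$. You merely spell out the divisibility-preserving bijection $\bx^{u}\by^{v}-\bx^{v}\by^{u}\mapsto\bx^{u}-\bx^{v}$ (including the vacuity of the anti-parallel pairing) that the paper's one-line proof leaves implicit, which is a faithful elaboration rather than a different argument.
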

\begin{proof}
By Prop.~\ref{prop:kerH} and Prop.~\ref{prop:ordICtoric} we obtain that
\begin{align*}
   \Gr\left(\Co\right)
      =\left\{\bx^{u}-\bx^{v}\left|\bx^{u}\by^{v}-\bx^{v}\by^{u}\in\Gr\left(I_{\Lambda(H_e)}\right)\right.\right\}.
\end{align*} 
Additionally, the Gröbner basis $\mathcal{G}$ equals 
the Graver basis for the ideal $I_{\Lambda(H_e)}$ by Prop.~\ref{prop:coincide}.
Therefore, the result follows.
\end{proof}

This result gives rise to Alg.~\ref{alg-graverbasis_ord} which computes the Graver basis for the
ordinary code ideal. This procedure makes use of the following macros: 
\begin{itemize}
 \item $\texttt{triangleHe}(H,B)$ applied to an $m\times n$ matrix $H$ over the finite field $\F_q$ and an 
       $\F_p$-vector space basis $B$ for $\F_q$ returns the $mr\times nr$ integer matrix $\triangle H_e$ constructed
       from the matrix $H$ according to~(\ref{eq:Hqordinary}).
 \item $\texttt{pLawrenceLift}(M)$ applied to an integer matrix $M$ and an integer (prime) number $p$ returns its $p$-Lawrence
	lifting.
 \item $\texttt{toricIdeal}(M,m,n,o)$ applied to an integer matrix $M$ and non-negative integers $m,n,o$ 
	returns a generating set for the toric ideal associated to this matrix
       in the ring $\KK[\bx,\by,\bz]$ with $\bx$, $\by$ and $\bz$ having the respective sizes $m,n,o$.
 \item $\texttt{substitute}(S,\bv\rightarrow\mathbf{1})$ applied to a set of polynomials $S$ and 
	a sequence of variables $\bv$ returns the set of polynomials from $S$ where all variables from $\bv$ have
	been substituted by $1$.
 \item $\texttt{groebnerBasis}(I,\succ)$ applied to a set of polynomials $I$ and a monomial order $\succ$ returns
	the reduced Gröbner basis for the ideal generated by this set w.r.t.\ the given order. 
\end{itemize}

\renewcommand{\algorithmicrequire}{\textbf{Input:}}
\renewcommand{\algorithmicensure}{\textbf{Output:}}

\begin{algorithm}[tb]
\caption{Computation of the Graver basis for the code ideal}
\begin{algorithmic}[1] 
\REQUIRE Finite field $\F_q$ with prime power $q=p^r$ and with a specified basis $B$, and a
	 $(n-k)\times n$ matrix $H$ over $\F_q$
\ENSURE Graver basis for the code ideal $I(\Co)$ associated to the $[n,k]$ code $\Co$ over $\F_p$ 
with parity check matrix $H$
\STATE $\triangle He=\texttt{triangleHe}(H,B);$
\STATE $\Lambda(H)_p=\texttt{pLawrenceLift}(\triangle He,p);$  
\STATE $I_{}=\texttt{toricIdeal}(\Lambda(H)_p,nr,nr,(n-k)r);$
\STATE $I_{\Lambda(H)}=\texttt{substitute}(I,\bz\rightarrow\mathbf{1});$
\STATE $G=\texttt{groebnerBasis}(I_{\Lambda(H)},\succ);$
\RETURN $\Gr(\Co)=\texttt{substitute}(G,\by\rightarrow\mathbf{1})$
\end{algorithmic}
\label{alg-graverbasis_ord}
\end{algorithm}

\begin{example}\label{ex:F3}
Consider the $[3,2]$ code $\Co$ over the field $\F_3$ with parity check matrix
\begin{align*}
 H=\begin{pmatrix}1&2&1\end{pmatrix}.
\end{align*}
Note that $\triangle H_e=\begin{pmatrix}1&2&1\end{pmatrix}$ and thus, the $3$-Lawrence lifting of the matrix 
$\triangle H_e$ is given by
\begin{align*}
  \Lambda(H_e)_3=\left(\begin{array}{ccc|ccc|c}
                1&2&1&0&0&0&3\\\hline
		&&&&&&\\
		&I_3&&&I_3&&\mathbf{0}\\
		&&&&&&
                \end{array}\right).
\end{align*}
Computing the reduced Gröbner basis w.r.t.\ the grevlex order 
of the corresponding toric ideal $I_{\Lambda(H_e)_3}$ and substituting
$z\mapsto 1$ yields a generating set for $I_{\Lambda(H_e)}$,
\begin{align*}
\big\{
&x_3^3+y_3^3,\:
x_2^3+y_2^3,\:
x_2x_3+y_2y_3,\:
x_1^3+y_1^3,\:
x_1x_3^2+y_1y_3^2,\:\\
&x_1^2x_3+y_1^2y_3,\:
x_1x_2+y_1y_2,\:
x_3y_2^2+x_2^2y_3,\:
x_3^2y_2+x_2y_3^2,\\
&x_3y_1+x_1y_3,\:x_1^2y_2+x_2y_1^2,\:
x_1y_2^2+x_2^2y_1,\:
x_1x_3y_2+x_2y_1y_3\big\}
\end{align*}
which is also the reduced Gröbner basis w.r.t.\ the same monomial order. Finally, making the substitution 
$\by\mapsto\mathbf{1}$ yields the Graver basis for the code $\Co$
\begin{align*}
 \Gr\left(\Co\right)&=\big\{
x_3^3+1,\:
x_2^3+1,\:
x_2x_3+1,\:
x_1^3+1,\:
x_1x_3^2+1,\:
x_1^2x_3+1,\:
x_1x_2+1,\\
&\quad\quad x_3+x_2^2,\:
x_3^2+x_2,\:
x_3+x_1,\:x_1^2+x_2,\:
x_1+x_2^2,\:
x_1x_3+x_2\big\}.
\end{align*}
\end{example}

Second, we consider generalized code ideals. For this, let $\bx$ denote the variables $x_{11},\dots,x_{n,q-1}$
and let $\by$ denote the variables $y_{11},\dots,y_{n,q-1}$. Let $\triangle H_{+,e}$ be the integer $mr\times n(q-1)$
matrix defined according to~(\ref{eq:Hq+}) and constructed from the parity check matrix $H$ of the code $\Co$.
Let 
\begin{align*}
 \Lambda\left(H_{+,e}\right)_p=\left(\begin{array}{ccc}
\triangle H_{+,e} & \mathbf{0} & pI_{mr} \\ 
I_{n(q-1)} & I_{n(q-1)} & \mathbf{0}
\end{array}
\right)
\end{align*}
be the $p$-Lawrence lifting of the matrix $\triangle H_{+,e}$ and let $I_{\Lambda\left(H_{+,e}\right)}$
be the ideal obtained from the toric ideal associated to the integer matrix $\Lambda\left(H_{+,e}\right)_p$
by substituting $\bz\mapsto\mathbf{1}$ according to Eq.~(\ref{eq:elimZ}).

\begin{proposition}\label{prop:GraverGen}
Let $\mathcal{G}$ be the reduced Gröbner basis for the ideal $I_{\Lambda\left(H_{+,e}\right)}$ 
w.r.t.\ any monomial order. The Graver basis for the generalized code ideal
$I_+(\Co)$ associated to the code $\Co$ is given by the set
\begin{align}
  \Gr_+\left(\Co\right)
      =\left\{ \bx^{u}-\bx^{v}\left|\bx^{u}\by^{v}-\bx^{v}\by^{u}\in\mathcal{G}\right.\right\}.
\end{align}
\end{proposition}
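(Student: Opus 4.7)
The plan is to imitate the argument for Proposition~\ref{prop:GraverOrdinary} step by step, substituting $H_{+,e}$ for $H_e$ throughout and invoking Proposition~\ref{prop:genICtoric2} in place of Proposition~\ref{prop:ordICtoric}. The key observation is that all three ingredients used in the ordinary case are already available in the generalized setting: Proposition~\ref{prop:kerH} describes the binomials in $I_{\Lambda(M)}$ for an arbitrary integer matrix $M$, Proposition~\ref{prop:coincide} is purely formal once the shape of the binomials is known, and Proposition~\ref{prop:genICtoric2} provides the toric description of $I_+(\Co)$ analogous to Proposition~\ref{prop:ordICtoric}.

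Concretely, I would first apply Proposition~\ref{prop:kerH} to the integer matrix $\triangle H_{+,e}$. This shows that $I_{\Lambda(H_{+,e})}$ is a binomial ideal whose (pure) binomials are exactly those of the form $\bx^{u}\by^{v}-\bx^{v}\by^{u}$ with $u-v\in\ker(H_{+,e})$ (taken over $\F_p$ and lifted to $\ZZ$). On the other hand, Proposition~\ref{prop:genICtoric2} together with the equivalence between Eq.~(\ref{eq:aux2}) and Eq.~(\ref{eq:aux3}) shows that a binomial $\bx^{u}-\bx^{v}$ lies in $I_+(\Co)$ if and only if $u-v\in\ker(H_{+,e})$. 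Combining these two characterisations, primitive binomials $\bx^u-\bx^v\in I_+(\Co)$ correspond bijectively to primitive binomials $\bx^u\by^v-\bx^v\by^u\in I_{\Lambda(H_{+,e})}$, so that
\begin{align*}
\Gr_+(\Co)=\left\{\bx^u-\bx^v\;\bigl|\;\bx^u\by^v-\bx^v\by^u\in\Gr\bigl(I_{\Lambda(H_{+,e})}\bigr)\right\}.
\end{align*}

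Finally, since every binomial in $I_{\Lambda(H_{+,e})}$ has the Lawrence-type shape $\bx^a\by^b-\bx^b\by^a$, Proposition~\ref{prop:coincide} applies and identifies its Graver basis with any reduced Gröbner basis $\mathcal{G}$. Substituting $\mathcal{G}$ for $\Gr(I_{\Lambda(H_{+,e})})$ in the displayed description then yields the claimed formula for $\Gr_+(\Co)$.

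The only genuinely delicate point is checking that primitivity transfers correctly between the two ideals: one must verify that $\bx^u\by^v-\bx^v\by^u$ is primitive in $I_{\Lambda(H_{+,e})}$ if and only if $\bx^u-\bx^v$ is primitive in $I_+(\Co)$. This follows because divisibility of $\bx^{u'}\by^{v'}$ by a monomial of the same symmetric shape is equivalent to the simultaneous divisibility of $\bx^{u'}$ by $\bx^u$ and $\bx^{v'}$ by $\bx^v$, and by the characterisation above every such symmetric binomial in $I_{\Lambda(H_{+,e})}$ arises from a binomial $\bx^{u'}-\bx^{v'}\in I_+(\Co)$. This bookkeeping is straightforward but is the step that actually uses the tight correspondence between the two ideals rather than just their generating sets.
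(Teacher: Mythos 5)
Your proposal is correct and follows essentially the same route as the paper, which proves Proposition~\ref{prop:GraverGen} by repeating the argument for Proposition~\ref{prop:GraverOrdinary} with Proposition~\ref{prop:genICtoric2} in place of Proposition~\ref{prop:ordICtoric}, and then invoking Propositions~\ref{prop:kerH} and~\ref{prop:coincide}. Your extra remark on how primitivity transfers between $I_+(\Co)$ and $I_{\Lambda(H_{+,e})}$ is a detail the paper leaves implicit, but it is consistent with and does not change the argument.
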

\begin{proof}
Using the same arguments as in the proof of Prop.~\ref{prop:GraverOrdinary}, 
this is a direct consequence of Prop.~\ref{prop:kerH},~\ref{prop:genICtoric2} and~\ref{prop:coincide}.
\end{proof}

This provides Alg.~\ref{alg-graverbasis_gen} which computes the Graver basis for the generalized code ideal. 
This procedure is essentially the same as Alg.~\ref{alg-graverbasis_ord}. The only difference is that it uses the
following macro:
\begin{itemize}
 \item $\texttt{triangleHe+}$ applied to an $m\times n$ matrix $H$ over the finite field $\F_q$ and an $F_p$-vector
       space basis $B$ for $\F_q$ returns the integer $mr\times n(q-1)$ matrix $\triangle H_{+,e}$ constructed from
       the matrix $H$ according to~(\ref{eq:Hq+}) wit $\pi_i$, $1\leq i\leq r$ are the projections w.r.t.\ the basis
       $B$.
\end{itemize}

\begin{algorithm}[tb]
\caption{Computation of the Graver basis for the generalized code ideal}
\begin{algorithmic}[1] 
\REQUIRE Fnite field $\F_q$ with prime power $q=p^r$ and with a specified basis B, 
	  and an $n-k\times n$ matrix $H$ over $\F_q$
\ENSURE Graver basis for the generalized code ideal $I_+(\Co)$ associated to the $[n,k]$ code $\Co$ over $\F_q$ 
with parity check matrix $H$
\STATE $\triangle He+=\texttt{triangleHe+}(H,B);$
\STATE $\Lambda(H)_p=\texttt{pLawrenceLift}(\triangle He,p);$  
\STATE $I_{}=\texttt{toricIdeal}(\Lambda(H)_p,n(q-1),n(q-1),(n-k)r);$
\STATE $I_{\Lambda(H)}=\texttt{substitute}(I,\bz\rightarrow\mathbf{1});$
\STATE $G=\texttt{groebnerBasis}(I_{\Lambda(H)},\succ);$
\RETURN $\Gr(\Co)=\texttt{substitute}(G,\by\rightarrow\mathbf{1})$
\end{algorithmic}
\label{alg-graverbasis_gen}
\end{algorithm}

\begin{example}\label{ex:F4_cont}
(Ex.~\ref{ex:F4} cont'd)
The 2-Lawrence lifting of the matrix $\triangle H_{+,e}$ gives the matrix
\begin{align*}
 \Lambda\left(H_{+,e}\right)_2=\left(\begin{array}{ccc ccc ccc|ccc|cc}
  1&0&\multicolumn{1}{c|}{1}& 1&1&\multicolumn{1}{c|}{0}& 0&1&1& &\mathbf{0}&& 2&0\\
  1&1&\multicolumn{1}{c|}{0}& 0&1&\multicolumn{1}{c|}{1}& 1&0&1& &\mathbf{0}&& 0&2\\ \hline
   &&&  &&& && &&&\\
   &&&  &I_{9}& &  & & & &I_{9}&&\mathbf{0}&\mathbf{0}\\
   &&&  &&& && &&&&
 \end{array}\right).
\end{align*}
Using this matrix the Graver basis for $I_+(\Co)$ can be computed by Alg.~\ref{alg-graverbasis_gen}.
This basis consists of $135$ binomials.
\end{example}

\begin{remark}
 If $\bx^u-\bx^{u'}$ is a primitive binomial for a certain binomial ideal, then clearly $\bx^{u'}-\bx^u$
is also primitive. The Graver basis provided by the Alg.~\ref{alg-graverbasis_ord} 
and~\ref{alg-graverbasis_gen} will contain all primitive binomial only up to scalar multiples.
In other words, it will contain either $\bx^u-\bx^{u'}$ or $\bx^{u'}-\bx^u$.  
\end{remark}


\section{Computing the Universal Gröbner basis}

In this section it will shown how the universal Gröbner basis for ordinary and generalized code ideals can be
computed from the Graver basis. Although, all the following results are applicable to both code ideals,
they will be stated only for generalized code ideals. 

\begin{lemma}\label{lem:notinGr}
 Let $I$ be a binomial ideal in $\KK[\bx]$ and let $\bx^u-\bx^{u'}$ be a binomial in $I$.
If there is a binomial $\bx^v-\bx^{v'}\in I$ such that either both monomials $\bx^v$ and $\bx^{v'}$ divide $\bx^u$
or both monomials $\bx^v$ and $\bx^{v'}$ divide $\bx^{u'}$, then $\bx^u-\bx^{u'}$ does not belong to any reduced 
Gröbner basis for the ideal $I$.  
\end{lemma}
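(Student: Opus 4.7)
The plan is to argue by contradiction. Fix an arbitrary monomial order $\succ$ and suppose $g = \bx^u - \bx^{u'}$ belongs to the reduced Gröbner basis $G$ of $I$ with respect to $\succ$; without loss of generality take $\bx^u = \lt_\succ(g)$. The two defining properties of a reduced Gröbner basis that I plan to exploit are (R1) the leading monomial $\bx^u$ is a minimal generator of the initial ideal $\lt_\succ(I)$, i.e., no proper divisor of $\bx^u$ lies in $\lt_\succ(I)$, and (R2) the non-leading monomial $\bx^{u'}$ does not lie in $\lt_\succ(I)$ at all. The strategy is to derive a violation of (R1) or (R2) from each of the two alternatives in the hypothesis.

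The easier branch is when $\bx^v$ and $\bx^{v'}$ both divide $\bx^{u'}$: whichever of $\bx^v, \bx^{v'}$ is the leading monomial of $\bx^v - \bx^{v'} \in I$ automatically sits in $\lt_\succ(I)$ and divides $\bx^{u'}$, hence $\bx^{u'} \in \lt_\succ(I)$, contradicting (R2).

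The other branch, with $\bx^v$ and $\bx^{v'}$ both dividing $\bx^u$, needs one additional split. Set $\bx^w = \lt_\succ(\bx^v - \bx^{v'})$, so $\bx^w \in \lt_\succ(I)$ and $\bx^w$ divides $\bx^u$. If this divisibility is strict, (R1) fails and we are done. The main obstacle is the borderline subcase $\bx^w = \bx^u$: then one of $\bx^v, \bx^{v'}$ equals $\bx^u$, and because the pure condition forces the two monomials of $\bx^v - \bx^{v'}$ to be coprime, the other must equal $1$, giving $\bx^u - 1 \in I$. Subtracting this from $\bx^u - \bx^{u'} \in I$ produces $1 - \bx^{u'} \in I$, whose leading monomial is $\bx^{u'}$ (provided $\bx^{u'} \neq 1$), again contradicting (R2). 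The fully degenerate configuration $\bx^{u'} = 1$ would force the two binomials to coincide; I would handle this by reading the lemma as tacitly requiring $\bx^v - \bx^{v'}$ to be distinct from $\bx^u - \bx^{u'}$, since otherwise $\bx^u - 1$ could itself appear in a reduced Gröbner basis and the statement would fail.
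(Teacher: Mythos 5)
Your argument is correct and follows essentially the same route as the paper's proof: in both branches you observe that the leading monomial of $\bx^v-\bx^{v'}$ lies in $\lt_\succ(I)$ and contradicts either the minimality of $\bx^u$ as a generator of the initial ideal or the fact that $\bx^{u'}$ must be a standard monomial. The only difference is that you explicitly treat the borderline subcase $\lt_\succ(\bx^v-\bx^{v'})=\bx^u$ (where purity forces the other monomial to be $1$, and subtraction gives $1-\bx^{u'}\in I$), which the paper silently skips by asserting that both monomials are \emph{proper} divisors of $\bx^u$; your remark that in the fully degenerate situation the lemma must be read with $\bx^v-\bx^{v'}$ distinct (up to sign) from $\bx^u-\bx^{u'}$ is a fair and correct caveat.
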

\begin{proof}
Let $\succ$ be any monomial order and $\bx^u$ be the leading term of the binomial $\bx^u-\bx^{u'}$ w.r.t.\ this order.

First, let $\bx^v-\bx^{v'}$ be a binomial in $I$ such that both terms divide $\bx^u$. 
One of them, $\bx^v$ or $\bx^{v'}$, is the leading monomial w.r.t.\ $\succ$. 
But as both are proper divisors of $\bx^u$, 
$\bx^u-\bx^{u'}$ cannot belong to the reduced Gröbner basis w.r.t.\ $\succ$.

Second, let $\bx^v-\bx^{v'}$ be a binomial in $I$ such that both terms divide $\bx^{u'}$. 
Again, either $\bx^v$ or $\bx^{v'}$ is the leading monomial w.r.t.\ $\succ$ and thus belongs to the leading ideal.
However, this is a contradiction to $\bx^{u'}$ being a standard monomial.
\end{proof}

\begin{example} (Ex.~\ref{ex:F3} cont'd)
The Graver basis for the linear code $\Co$ over $\F_3$ with parity check matrix $H=\left(1\: 2\: 1\right)$
 is given by
\begin{align*}
 \Gr\left(\Co\right)&=\big\{
x_3^3+1,\:
x_2^3+1,\:
x_2x_3+1,\:
x_1^3+1,\:
x_1x_3^2+1,\:
x_1^2x_3+1,\:
x_1x_2+1,\\
&\quad\quad x_3+x_2^2,\:
x_3^2+x_2,\:
x_3+x_1,\:x_1^2+x_2,\:
x_1+x_2^2,\:
x_1x_3+x_2\big\}.
\end{align*}
Applying Lem.~\ref{lem:notinGr} we deduce that $x_1x_3+x_2$ does not belong to the universal
Gröbner basis because both terms of the primitive binomial $x_1+x_3$ divide $x_1x_3$. In fact, it can be shown that
$\mathcal{U}(\Co)=\Gr(\Co)\setminus\left\{x_1x_3+x_2\right\}$.
\end{example}

\begin{proposition}\label{prop:char2uniGB}
The universal Gröbner basis for the generalized code ideal associated to a 
linear code over a finite field with characteristic two consists of exactly
all those primitive binomials whose involved terms are both unequal to~$1$
with the exception of the binomials of the form $x_{ij}^2-1$.
\end{proposition}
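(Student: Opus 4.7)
The plan is to prove both containments of
\[
\mathcal{U}_+(\Co) = \left\{\bx^u-\bx^v \in \Gr_+(\Co) : u, v \neq 0\right\} \cup \left\{x_{ij}^2 - 1 \in \Gr_+(\Co)\right\},
\]
using Lem.~\ref{lem:notinGr} for the inclusion $\subseteq$ and a converse divisibility criterion (passed through the Lawrence lift) for $\supseteq$.

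For $\subseteq$, I would take a primitive $f = \bx^u - 1 \in \mathcal{U}_+(\Co)$ and force $\bx^u = x_{ij}^2$. If some $u_{ij} \geq 2$, then because $\alpha^j + \alpha^j = 0$ in characteristic two, the binomial $x_{ij}^2 - 1 \in I_q \subseteq I_+(\Co)$ has both monomials dividing $\bx^u$; unless $\bx^u = x_{ij}^2$ itself, Lem.~\ref{lem:notinGr} would exclude $f$ from every reduced Gröbner basis, forcing $\bx^u = x_{ij}^2$. Otherwise $u$ is $\{0,1\}$-valued, and I would split $u = u'+u''$ into two nonzero disjoint-support pieces --- across two distinct positions when $u$ touches more than one, or across two columns at a single active position otherwise. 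The characteristic-two identity
\[
\blacktriangledown u' - \blacktriangledown u'' = \blacktriangledown u' + \blacktriangledown u'' = \blacktriangledown u \in \Co
\]
places $\bx^{u'}-\bx^{u''} \in I_+(\Co)$ with both monomials proper divisors of $\bx^u$, so Lem.~\ref{lem:notinGr} again excludes $f$. The only residual case is $\bx^u = x_{ij}$, i.e.\ a weight-one codeword, which is implicitly ruled out by the proposition's phrasing.

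For $\supseteq$, a primitive $x_{ij}^2 - 1$ --- which is primitive iff $x_{ij} - 1 \notin I_+(\Co)$ --- lies in the reduced Gröbner basis for any order in which $x_{ij}$ is the smallest variable: no binomial of $I_+(\Co)$ can then have $x_{ij}$ as leading monomial (such a binomial would necessarily be $x_{ij} - 1$, excluded by primitivity), so $x_{ij}^2$ is a minimal leading monomial of the leading ideal. For a primitive $\bx^u-\bx^v$ with $u, v \neq 0$, I would invoke the converse of Lem.~\ref{lem:notinGr} restricted to primitive binomials: membership in some reduced basis is equivalent to no binomial of $I_+(\Co)$ having both its monomials dividing $\bx^u$ nor both dividing $\bx^v$. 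The main obstacle is making this converse criterion rigorous; the cleanest route lifts $\bx^u - \bx^v$ to $\bx^u\by^v - \bx^v\by^u \in I_{\Lambda(H_{+,e})}$, uses Prop.~\ref{prop:coincide} to identify the universal and every reduced basis upstairs with the Graver basis, and then descends via $\by \mapsto \mathbf{1}$.
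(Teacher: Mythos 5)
Your exclusion half is essentially the paper's argument and is fine: in characteristic two a squarefree exponent $c$ with $\deg\bx^c\geq 2$ splits as $c=c'+c''$ with both pieces nonzero, the split binomial $\bx^{c'}-\bx^{c''}$ lies in $I_+(\Co)$, and Lem.~\ref{lem:notinGr} kills $\bx^c-1$ (the paper does the same with the single split $\bx^c=x_{ij}\bx^{c'}$, reducing modulo $x_{ij}^2-1$). Your argument that a primitive $x_{ij}^2-1$ does lie in some reduced Gröbner basis (take $x_{ij}$ smallest, so $x_{ij}^2$ is a minimal generator of the leading ideal) is a sensible addition which the paper's proof in fact glosses over; the residual case $x_{ij}-1$ (weight-one codewords) is waved away by both you and the paper.

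The genuine gap is the other inclusion: showing that \emph{every} primitive binomial $\bx^u-\bx^{u'}$ with $u,u'\neq\mathbf{0}$ belongs to $\mathcal{U}_+(\Co)$, which is the substance of the proposition, is not proved by your proposal. The ``converse of Lem.~\ref{lem:notinGr}'' you invoke is not a theorem and is false for general binomial ideals: in the paper's own example $I=\langle x^2-xy,\,y^2-xy\rangle$ the binomial $y^2-xy$ satisfies the divisibility criterion vacuously (no binomial of $I$ has both terms dividing $y^2$, nor both dividing $xy$) yet lies in no reduced Gröbner basis. So its validity for $I_+(\Co)$ in characteristic two is precisely what has to be established, and the proposed rescue through the Lawrence lift cannot supply it: Prop.~\ref{prop:coincide} does identify the Graver basis, the universal basis and every reduced basis for $I_{\Lambda(H_{+,e})}$, but the substitution $\by\mapsto\mathbf{1}$ does not transport membership in a reduced Gröbner basis downstairs --- if it did, you would conclude $\mathcal{U}_+(\Co)=\Gr_+(\Co)$, contradicting the exclusion half of the very statement you are proving (primitive $\bx^c-1$ with $\deg\bx^c>2$ are dropped) as well as the $\F_3$ example, where $x_1x_3+x_2\in\Gr(\Co)\setminus\mathcal{U}(\Co)$. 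The paper closes this gap by a concrete construction: for each such primitive binomial with $\deg\bx^u=s\geq\deg\bx^{u'}=t$ it takes an order making the variables outside $\supp(u)\cup\supp(u')$ largest and weighting $\omega_{ij}=1$ on $\supp(u)$ and $\omega_{ij}=\tfrac{s-1}{t}$ on $\supp(u')$; assuming the binomial is absent from $\mathcal{G}_\succ(I_+(\Co))$, it picks a reduced-basis binomial $\bx^v-\bx^{v'}$ with $\bx^v\mid\bx^u$, uses primitivity together with the relations in $I_q$ to force $\bx^v-\bx^{v'}=\bx^{u_1}-\bx^{u_2}\bx^{u'}$ with $\bx^{u_1}\bx^{u_2}=\bx^u$ and $u_1,u_2\neq\mathbf{0}$, and then the weight count $u_1\cdot\omega=s-i<s-1+i=(u'+u_2)\cdot\omega$ flips the leading term, a contradiction. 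Some argument of this kind (or a genuine proof of your divisibility criterion for these specific ideals) is needed; without it the inclusion $\supseteq$ stands unproved.
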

\begin{proof}
Let $\Co$ be a $[n,k]$ code over a field $\F_q$ with characteristic $2$. 
Note that because of $x_{ij}^2-1\in I_+(\Co)$ for all $1\leq i\leq n$ and $1\leq j\leq q-1$, 
all primitive binomials must be squarefree.
 
First, we show that no primitive binomial with one term equal $1$ belongs to the universal Gröbner basis.
For this, let $\bx^c-1\in I_+(\Co)$ be a primitive binomial with $\deg(\bx^c)>1$.
Since $\bx^c\neq 1$, we can write $\bx^c=x_{ij}\bx^{c'}$.
Then $x_{ij}(\bx^c-1)\equiv \bx^{c'}-x_{ij}\mod x_{ij}^2-1$ shows that $\bx^{c'}-x_{ij}$
belongs to $I_+(\Co)$. Since $\bx^{c'}$ and $x_{ij}$ are both proper divisor of $\bx^c$, 
the binomial $\bx^c-1$ cannot belong to the universal Gröbner basis according to Lem.~\ref{lem:notinGr}.

Second, we show that any primitive binomial whose involved terms are both unequal to $1$ belong to the 
universal Gröbner basis. For this,
let $\bx^u-\bx^{u'}\in I_+(\Co)$ be a primitive binomial with $u,u'\neq\mathbf{0}$ 
and put $\deg(\bx^u)=:s$ and $\deg(\bx^{u'})=:t$ and assume that $s\geq t$
(see Prop.~\ref{prop:bothB}).

Suppose this binomial does not belong to the universal Gröbner basis and hence not 
to any reduced Gröbner basis.
Let $\succ$ be a monomial order that orders
$$\left\{x_{ij}\mid ij\notin\supp(u)\cup\supp(u')\right\}\succ
\left\{x_{ij}\mid ij\in\supp(u)\cup\supp(u')\right\}$$
and that compares the monomials in $\{x_{ij}\mid ij\in\supp(u)\cup\supp(u')\}$ by their $\omega$-degree,
where $\omega_{ij}=1$ for $ij\in\supp(u)$ and $\omega_{ij}=\frac{s-1}{t}$ for $ij\in\supp(u')$.
For this order, $\bx^u\succ\bx^{u'}$ because $u\cdot\omega=s>s-1=\frac{s-1}{t}t=u'\cdot\omega$.

Since the considered binomial lies in the ideal $I_+(\Co)$ it must be reduced to zero on division by 
 the reduced Gröbner basis $\mathcal{G}_\succ(I_+(\Co))$ w.r.t.\ $\succ$ and so
there must be a pure binomial $\bx^v-\bx^{v'}\in\mathcal{G}_\succ(I_+(\Co))$ with leading term $\bx^v$ 
 such that $\bx^v\mid\bx^u$.

Clearly, $\supp(v)\cap\supp(v')=\emptyset$ and $\supp(v)\subset\supp(u)$ and by the chosen monomial order, 
$\supp(v')\subseteq\supp(u)\cup\supp(u')$.
But $\supp(v')\not\subset\supp(u')$ since this would contradict 
the primitiveness of the binomial $\bx^u-\bx^{u'}$.
Additionally, $\supp(v')\not\subset\supp(u)$ because otherwise the binomial 
$\bx^{v+v'}-1\equiv\bx^{v'}(\bx^v-\bx^{v'})\mod I_q$ would also contradict the primitiveness.
In other words, the monomial $\bx^{v'}$ must involve variables from $\bx^u$ as well as $\bx^{u'}$.

Claim that $\supp(v)\cup\supp(v')=\supp(u)\cup\supp(u')$. 
Indeed, write
$\bx^{v'}$ as a product of monomials $\bx^{u_1}$ and $\bx^{u_2}$ such that $\supp(u_1)\subset\supp(u)$, 
$\supp(u_2)\subset\supp(u')$. Then
$\bx^{u_1}(\bx^{v}-\bx^{u_1+u_2})\equiv\bx^{u_1+v}-\bx^{u_2}\mod I_q$ belongs to $I_+(\Co)$
 and we have $\bx^{u_1+v}\mid\bx^u$ and $\bx^{u_2}\mid\bx^{u'}$. Since the binomial $\bx^u-\bx^{u'}$
is primitive, this is a contradiction unless $\bx^{u_1+v}=\bx^u$ and $\bx^{u_2}=\bx^{u'}$.
This proves the claim.

On the whole, $\bx^v-\bx^{v'}=\bx^{u_1}-\bx^{u_2}\bx^{u'}$ where $\bx^{u_1}\bx^{u_2}=\bx^{u}$, 
$u_1,u_2\neq\mathbf{0}$. 

Since $\deg(\bx^u)=s$ there must be an integer $i\geq 1$ such that $\deg(\bx^{u_1})=s-i$ and $\deg(\bx^{u_2})=i$.
But then  $$u_1\cdot\omega=s-i<s\leq s-1+i=u'\cdot\omega+u_2\cdot\omega=(u'+u_2)\cdot\omega.$$
shows that $\lt_\succ(\bx^{v}-\bx^{v'})=\bx^{v'}$, which is a contradiction. Hence, the binomial $\bx^u-\bx^{u'}$
has to belong to the universal Gröbner basis.
\end{proof}

For linear codes over a finite field with characteristic $2$, Prop.~\ref{prop:char2uniGB} provides an easy
way to obtain the universal Gröbner from the Graver basis.

\begin{example}
Revisit Ex.~\ref{ex:F4} and~\ref{ex:F4_cont}. The Graver basis consists of $135$ binomials and
by Prop.~\ref{prop:char2uniGB} we deduce that the universal Gröbner basis consists of $36$ binomials less.
\end{example}

For codes over a finite field with characteristic greater than $2$ a method similar to the one in~\cite{sturmfels}
for toric ideals can be applied in order to compute the universal Gröbner basis from the Graver basis. 

To this end, for a given non-negative weight vector $\omega\in\R_+^{n(q-1)}$
and an ideal $I$, denote by $\mathcal{G}_{\omega}(I)$ the reduced Gröbner basis for $I$ w.r.t.\ $\succ_\omega$,
where $\succ$ is some tie breaking monomial order.
For $u,u'\in\NN_0^{n(q-1)}$ define the cone
\begin{align}
 C[u,u']=\left\{\omega\in\R_+^{n(q-1)}\mid \omega\cdot u>\omega\cdot u'\:\wedge
\:\bx^{ u}-\bx^{ u'}\in \mathcal{G}_\omega(I_+(\Co))\right\}.\label{eq:coneC}
\end{align}
\begin{remark}
 The cone $C[u,u']$ defined in (\ref{eq:coneC}) is essentially the same as in~\cite{sturmfels}. In this setting,
however, it is restricted to the positive orthant. 
\end{remark}

\begin{lemma}{\cite[Proposition 1.11]{sturmfels}}\label{lem:weightsucc}
 For any monomial order $\succ$ and any ideal $I\subset\KK[\bx]$, there exists a non-negative integer vector 
$\omega\in\NN^{n(q-1)}$ such that $\lt_\omega(I)=\lt_\succ(I)$.
\end{lemma}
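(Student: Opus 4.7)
The plan is to leverage the finiteness of the reduced Gröbner basis $\mathcal{G}_\succ(I)=\{g_1,\dots,g_s\}$ together with the criterion recalled in Section~2 (Lemma~2.10 of fukJensenFans): $\lt_\omega(I)=\lt_\succ(I)$ holds as soon as $\lt_\omega(g_i)=\lt_\succ(g_i)$ for every $g_i$ in the reduced basis. This collapses the problem from an infinite family of polynomials in $I$ to the finitely many basis elements.

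Writing $g_i=\bx^{a_i}-\sum_j c_{i,j}\bx^{b_{i,j}}$ with $\bx^{a_i}=\lt_\succ(g_i)$, the condition $\lt_\omega(g_i)=\bx^{a_i}$ is equivalent to the strict inequalities $\omega\cdot(a_i-b_{i,j})>0$ over all $i,j$. Gathering these exponent differences into a finite set $E\subset\ZZ^{n(q-1)}$, the lemma amounts to finding a single $\omega\in\R_+^{n(q-1)}$ with $\omega\cdot e>0$ for every $e\in E$. The set of such weights is an open convex cone; once non-empty, it automatically contains a rational point, and after clearing denominators we obtain the desired non-negative integer vector.

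The main obstacle is therefore the non-emptiness of this cone, and for this I would invoke Robbiano's representation theorem: the order $\succ$ is induced by a real matrix $M\in\R^{n(q-1)\times n(q-1)}$ whose first row $M_1$ has non-negative entries, in the sense that $\bx^a\succ\bx^b$ iff $Ma>Mb$ lexicographically. For each $e\in E$ there is a smallest row index $k(e)$ with $M_{k(e)}\cdot e>0$, while $M_\ell\cdot e=0$ for every $\ell<k(e)$. Choosing a sufficiently large integer $N$ (larger than all $|M_\ell\cdot e|$ appearing) and setting
\[\omega\;=\;N^{n(q-1)-1}M_1+N^{n(q-1)-2}M_2+\cdots+M_{n(q-1)},\]
the dominant contribution $N^{n(q-1)-k(e)}\,M_{k(e)}\cdot e$ swamps all later terms, so that $\omega\cdot e>0$ for every $e\in E$; the large weight on $M_1$ also keeps $\omega$ componentwise non-negative. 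A final rational approximation and scaling produces $\omega\in\NN^{n(q-1)}$, and the criterion from Section~2 closes the argument.
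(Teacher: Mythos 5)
Your argument is correct, but note that the paper gives no proof of this lemma at all: it is quoted verbatim from Sturmfels (Proposition 1.11), and your route differs from that source's. Sturmfels also reduces, via the finiteness of $\mathcal{G}_\succ(I)$, to finding $\omega\geq 0$ with $\omega\cdot(a_i-b_{i,j})>0$ for the finitely many exponent differences, but he proves non-emptiness of this open cone by a Farkas/linear-programming duality argument: if no such $\omega$ existed, there would be non-negative integers $\lambda_{i,j}$, not all zero, with $\sum_{i,j}\lambda_{i,j}(a_i-b_{i,j})\leq 0$ componentwise, and multiplying up the relations $\bx^{a_i}\succ\bx^{b_{i,j}}$ and using compatibility of $\succ$ with multiplication and with divisibility yields a contradiction — no representation theorem is needed. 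You instead invoke Robbiano's matrix description of $\succ$ and collapse its rows $N$-adically; this is equally valid and has the advantage of being constructive (it exhibits an explicit weight vector in the interior of the Gröbner cone), at the price of importing a heavier structural result. One spot needs tightening: the componentwise non-negativity of $\omega=\sum_{\ell}N^{d-\ell}M_\ell$ does not follow from ``the large weight on $M_1$'' alone, since $M_1$ may have zero entries in some coordinate $j$ and the lower rows may be negative there. What saves you is that for a monomial order $x_{j}\succ 1$, so in every column the first non-zero entry of $M$ is positive, and the same $N$-adic dominance argument applied columnwise gives $\omega_j>0$ for $N$ large; the resulting strict positivity also ensures that the final rational perturbation and integer scaling stay inside the cone and inside the positive orthant, so the criterion from Section~2 applies.
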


\begin{proposition}\label{prop:Conenonempty}
The primitive binomial $\bx^u-\bx^{u'}\in I_+(\Co)$ belongs to the universal Gröbner basis of $I_+(\Co)$ 
if and only if the cone $C[u,u']$ is non-empty.
\end{proposition}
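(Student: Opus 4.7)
The plan is to exploit the definition $\mathcal{U}(I_+(\Co))=\bigcup_{\succ'}\mathcal{G}_{\succ'}(I_+(\Co))$ and match it against the weight-vector definition of $C[u,u']$, using Lemma~\ref{lem:weightsucc} together with the two cited equivalences from Section~2: (i)~equal leading ideals force equal reduced Gröbner bases, and (ii)~$\lt_\omega(I)=\lt_{\succ'}(I)$ iff $\lt_\omega(g)=\lt_{\succ'}(g)$ for every $g$ in $\mathcal{G}_{\succ'}(I)$. Both directions will then amount to a translation between ``monomial order $\succ'$'' and ``weight vector $\omega$ refined by the fixed tie-breaker $\succ$''.

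The backward direction I expect to be immediate. Any $\omega\in C[u,u']$ supplies the monomial order $\succ_\omega$ (built from the fixed tie-breaker $\succ$), and then by the very definition of $C[u,u']$ one has $\bx^u-\bx^{u'}\in\mathcal{G}_{\succ_\omega}(I_+(\Co))\subseteq\mathcal{U}(I_+(\Co))$.

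For the forward direction, I would begin by choosing a monomial order $\succ'$ with $\bx^u-\bx^{u'}\in\mathcal{G}_{\succ'}(I_+(\Co))$ and, without loss of generality, with $\bx^u$ as $\succ'$-leading monomial. Then I would invoke Lemma~\ref{lem:weightsucc} to produce a non-negative weight vector $\omega\in\NN^{n(q-1)}$ satisfying $\lt_\omega(I_+(\Co))=\lt_{\succ'}(I_+(\Co))$. Equivalence~(ii) above then forces $\lt_\omega(g)=\lt_{\succ'}(g)$ for every $g\in\mathcal{G}_{\succ'}(I_+(\Co))$, and applying this to $g=\bx^u-\bx^{u'}$ yields the required inequality $\omega\cdot u>\omega\cdot u'$. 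It remains to show $\bx^u-\bx^{u'}\in\mathcal{G}_{\succ_\omega}(I_+(\Co))$, which I will deduce from~(i) once I verify the chain $\lt_{\succ_\omega}(I_+(\Co))=\lt_\omega(I_+(\Co))=\lt_{\succ'}(I_+(\Co))$; together these equalities give $\mathcal{G}_{\succ_\omega}(I_+(\Co))=\mathcal{G}_{\succ'}(I_+(\Co))$ and hence $\omega\in C[u,u']$.

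The main obstacle will be the identification $\lt_{\succ_\omega}(I_+(\Co))=\lt_\omega(I_+(\Co))$: Section~2 explicitly warns that $\lt_\omega(I)$ need not even be a monomial ideal in general, so this equality is not automatic. I plan to handle it by exploiting the conclusion $\lt_\omega(g)=\lt_{\succ'}(g)$ above; since each $\succ'$-leading term is a single monomial, the $\omega$-initial forms of the generators of $\mathcal{G}_{\succ'}(I_+(\Co))$ are already monomials, so the tie-breaker $\succ$ has nothing to break and the two leading ideals must coincide. Once this bookkeeping is in place, the rest of the proof is a direct translation between the two languages and needs no further work.
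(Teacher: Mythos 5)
Your proposal is correct and follows essentially the same route as the paper: the backward direction is immediate from the definition of $C[u,u']$, and the forward direction uses Lemma~\ref{lem:weightsucc} to replace the monomial order by a weight vector $\omega$ with $\lt_\omega(I_+(\Co))=\lt_{\succ'}(I_+(\Co))$, from which $\omega\cdot u>\omega\cdot u'$ follows. You are in fact slightly more careful than the paper, which only derives the strict inequality and leaves implicit the remaining requirement in the definition of $C[u,u']$, namely that $\lt_{\succ_\omega}(I_+(\Co))=\lt_\omega(I_+(\Co))$ and hence $\bx^u-\bx^{u'}\in\mathcal{G}_\omega(I_+(\Co))$, which is exactly the bookkeeping step you address.
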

\begin{proof}
 If the cone $C[u,u']$ is non-empty, then obviously $\bx^u-\bx^{u'}\in\mathcal{U}_+(\Co)$.

Suppose $\bx^u-\bx^{u'}$ belongs to the universal Gröbner basis of $I_+(\Co)$.
Then there is a monomial order $\succ$ such that $\bx^u-\bx^{u'}\in\mathcal{G}_\succ(I_+(\Co))$
with $\bx^u$ being the leading monomial and thus $\bx^{u'}$ being a standard monomial. 
According to Lem.~\ref{lem:weightsucc} there is a weight vector $\omega\in\R_+^{n(q-1)}$ such that
 $\lt_\omega(I_+(\Co))=\lt_\succ(I_+(\Co))$.
Therefore, $\bx^u\in\lt_\omega(I_+(\Co))$. Moreover, $\bx^u-\bx^{u'}\notin\lt_\omega(I_+(\Co))$ because otherwise
$$\bx^u-\left(\bx^u-\bx^{u'}\right)=\bx^{u'}\in\lt_\omega(I_+(\Co))=\lt_\succ(I_+(\Co))$$ which is a contradiction
to $\bx^{u'}$ being a standard monomial. This implies that $\omega\cdot u>\omega\cdot u'$ 
and thus that $C[u,u']$ is non-empty.
\end{proof}

For $u\in\NN_0^{n(q-1)}$ and a linear code $\Co$ define the sets
\begin{align}
 \Cos(u,\Co)=\Cos(u)= \left\{v\in\NN_0^{n(q-1)}\left|\blacktriangledown u-\blacktriangledown v\in\Co\right.\right\}
\end{align}
and
 \begin{align}
 \mathcal{M}(u)=
\left\{\omega\in\R_+^{n(q-1)}\left| \omega\cdot u<\omega\cdot v\:\forall v\in\Cos(u)\setminus\{u\}\right.\right\}.
\label{eq:Mu_ineq}
\end{align}

\begin{lemma}{\cite[Lemma 7.4]{sturmfels}}\label{lem:compCone} 
For $u,u'\in\NN_0^{n(q-1)}$,
 \begin{align}
  C[u,u']=\mathcal{M}(u')\cap\bigcap_{ij\in\supp(u)}\mathcal{M}(u-\mathbf{e}_{ij}).
 \end{align}
\end{lemma}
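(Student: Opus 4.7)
The plan is to unpack what it means for the binomial $\bx^u-\bx^{u'}$ to belong to the reduced Gröbner basis $\mathcal{G}_{\succ_\omega}(I_+(\Co))$ and translate each condition into a statement about the sets $\mathcal{M}(\cdot)$. A pure binomial in $I_+(\Co)$ lies in $\mathcal{G}_{\succ_\omega}(I_+(\Co))$ exactly when (i) its leading monomial is a minimal generator of the leading ideal $\lt_{\succ_\omega}(I_+(\Co))$ and (ii) its other monomial is a standard monomial. The sets $\mathcal{M}(w)$ are tailored to capture exactly this: $\omega\in\mathcal{M}(w)$ asserts that $\bx^w$ is the strict $\omega$-minimum, hence the $\succ_\omega$-minimum, of its coset $\Cos(w)$, which is the same as $\bx^w$ being a standard monomial. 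Moreover, the inequality $\omega\cdot u>\omega\cdot u'$ required by the definition of $C[u,u']$ will be automatic once $\omega\in\mathcal{M}(u')$, because $u\in\Cos(u')\setminus\{u'\}$.

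For the inclusion $C[u,u']\subseteq\mathcal{M}(u')\cap\bigcap_{ij\in\supp(u)}\mathcal{M}(u-\mathbf{e}_{ij})$, I would assume $\omega\in C[u,u']$ and proceed as follows. Since $\bx^{u'}$ is standard, no $\bx^v-\bx^{u'}\in I_+(\Co)$ with $v\in\Cos(u')\setminus\{u'\}$ can have $\bx^{u'}$ as its leading term under $\succ_\omega$, forcing $\omega\cdot v\geq\omega\cdot u'$. A short separate argument (using Lem.~\ref{lem:weightsucc} to pass to an integer weight realising $\succ_\omega$, or a generic perturbation within the same Gröbner fan cone) upgrades this to strict inequality, yielding $\omega\in\mathcal{M}(u')$. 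Similarly, minimality of $\bx^u$ as a generator of $\lt_{\succ_\omega}(I_+(\Co))$ means no proper divisor of $\bx^u$ lies in the leading ideal; in particular, each $\bx^{u-\mathbf{e}_{ij}}$ for $ij\in\supp(u)$ is standard, and the same strict-inequality upgrade gives $\omega\in\mathcal{M}(u-\mathbf{e}_{ij})$.

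For the reverse inclusion, I would start with $\omega$ in the claimed intersection. The condition $\omega\in\mathcal{M}(u')$ combined with $u\in\Cos(u')\setminus\{u'\}$ directly yields $\omega\cdot u>\omega\cdot u'$ and guarantees that $\bx^{u'}$ is the strict $\omega$-minimum of its coset, hence standard. For every $ij\in\supp(u)$, the condition $\omega\in\mathcal{M}(u-\mathbf{e}_{ij})$ makes $\bx^{u-\mathbf{e}_{ij}}$ the strict $\omega$-minimum of $\Cos(u-\mathbf{e}_{ij})$, hence standard; thus no proper divisor of $\bx^u$ is a leading monomial, while $\bx^u$ itself is a leading monomial (since $\omega\cdot u>\omega\cdot u'$ and $\bx^u-\bx^{u'}\in I_+(\Co)$). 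Hence $\bx^u$ is a minimal generator of the leading ideal, and together with $\bx^{u'}$ being standard this verifies $\bx^u-\bx^{u'}\in\mathcal{G}_{\succ_\omega}(I_+(\Co))$, so $\omega\in C[u,u']$.

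The main obstacle will be the tie-breaking discrepancy between the two sides of the claimed identity: the sets $\mathcal{M}(\cdot)$ are cut out by strict inequalities, whereas ``standard monomial'' only requires being the $\succ_\omega$-minimum and so in principle tolerates $\omega$-ties resolved by $\succ$. Because $C[u,u']$ itself uses the strict inequality $\omega\cdot u>\omega\cdot u'$ and picks out a (relatively) open piece of the Gröbner fan, this gap collapses after either a generic perturbation of $\omega$ within the same Gröbner cone or an appeal to Lem.~\ref{lem:weightsucc}, so the identity holds exactly as in the toric-ideal proof of~\cite[Lemma 7.4]{sturmfels}.
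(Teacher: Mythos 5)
Your overall strategy is exactly the one behind the cited proof (the paper itself offers no argument beyond the reference to \cite[Lemma 7.4]{sturmfels}): membership of $\bx^u-\bx^{u'}$ in $\mathcal{G}_{\succ_\omega}(I_+(\Co))$ with leading term $\bx^u$ is equivalent to $\bx^u$ being a minimal generator of $\lt_{\succ_\omega}(I_+(\Co))$ and $\bx^{u'}$ being standard, the sets $\mathcal{M}(w)$ encode standardness of $\bx^w$ through its coset $\Cos(w)$, and $u\in\Cos(u')\setminus\{u'\}$ makes the inequality $\omega\cdot u>\omega\cdot u'$ automatic. So in substance you reproduce the intended argument.

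The one step that does not work as written is the ``upgrade to strict inequality'' in the inclusion $C[u,u']\subseteq\mathcal{M}(u')\cap\bigcap_{ij\in\supp(u)}\mathcal{M}(u-\mathbf{e}_{ij})$. Membership in $\mathcal{M}(u')$ is a condition on the \emph{given} $\omega$, but both devices you invoke --- a generic perturbation inside the Gröbner cone, or Lem.~\ref{lem:weightsucc} --- produce a \emph{different} weight vector and therefore cannot exclude a tie $\omega\cdot u'=\omega\cdot v$ for the original $\omega$. The argument that actually closes this is: if $\omega\cdot u'=\omega\cdot v$ for some $v\in\Cos(u')\setminus\{u'\}$, then $\lt_\omega(\bx^v-\bx^{u'})=\bx^v-\bx^{u'}\in\lt_\omega(I_+(\Co))$, and whenever $\omega$ represents the order on $I_+(\Co)$, i.e.\ $\lt_\omega(I_+(\Co))=\lt_{\succ_\omega}(I_+(\Co))$ is a monomial ideal, a binomial lying in a monomial ideal forces both of its terms, in particular $\bx^{u'}$, into $\lt_{\succ_\omega}(I_+(\Co))$, contradicting standardness; the same reasoning handles the divisors $\bx^{u-\mathbf{e}_{ij}}$. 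For $\omega$ on a cone boundary, where ties are resolved by the tie-breaking order $\succ$, the strict inequalities defining $\mathcal{M}$ can genuinely fail even though $\bx^u-\bx^{u'}$ lies in $\mathcal{G}_{\succ_\omega}(I_+(\Co))$; this is a wrinkle of the adaptation of Sturmfels' definition rather than of your proof, and it is harmless for the later use of the lemma, since Prop.~\ref{prop:Conenonempty} and Alg.~\ref{alg:uniGB} only need nonemptiness of the open cone, where such $\omega$ can be taken generic. Your reverse inclusion is fine, modulo the routine remark that the reduced Gröbner basis element with leading term $\bx^u$ has tail equal to the normal form of $\bx^u$, which is $\bx^{u'}$ because $\bx^{u'}$ is standard and congruent to $\bx^u$ modulo $I_+(\Co)$.
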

For a proof see~\cite{sturmfels}.
The set $\mathcal{M}(v)$ and thus the cone $C[u,u']$ can be computed from the Graver basis~\cite{sturmfels}.
To see this, note that
$\omega\in\mathcal{M}(u)$ implies $\bx^u\notin\lt_\omega(I_+(\Co))$.
Additionally,
\begin{align}
 \lt_\omega(I_+(\Co))=\left\langle \lt_\omega(f)\mid f\in\Gr_+(\Co)\right\rangle
\end{align}
and hence, we see that a monomial $\bx^u$ does not belong to the leading ideal $\lt_\omega(I_+(\Co))$ if and only
if for every primitive binomial $\bx^v-\bx^{v'}$ in $I_+(\Co)$ such that $\bx^v$ divides $\bx^u$,
$\lt_\omega(\bx^v-\bx^{v'})\neq \bx^v$, which is equivalent to $\omega\cdot v\leq\omega\cdot v'$.
But as $\mathcal{M}(u)$ is open, we see that $\mathcal{M}(u)$ is described by all such strict inequalities. 
This yields the following alternative description of the set $\mathcal{M}(v)$
\begin{align}
 \mathcal{M}(v)=\left\{\omega\in\R_+^{n(q-1)}\left| \forall\bx^u-\bx^{u'}\in\Gr\mbox{ with }\bx^u\mid\bx^v: 
\left[\omega\cdot u'>\omega\cdot u\right]\right.\right\}.\label{eq:MvAlternDescription}
\end{align}

Similar to~\cite[Corollary 7.9, Proof of Theorem 7.8]{sturmfels} we show that if $\bx^u-\bx^{u'}$ belongs to 
the universal Gröbner basis for $I_+(\Co)$, then so does $\bx^{u'}-\bx^u$.
Although this is true for toric ideals and binomial ideals associated to integer lattices~\cite{sturmfels,swz_lattice},
it does not hold for binomial ideals in general as the following example demonstrates.
\begin{example}
Consider the binomial ideal $I=\left\langle x^2-xy, y^2-xy\right\rangle\subset\KK[x,y]$.
The reduced Gröbner basis w.r.t.\ the lex order with $x\succ y$ is given by the set $\{xy-y^2,x^2-y^2\}$.
Hence, $xy-y^2$ with belongs to the universal Gröbner basis of $I$. Suppose $y^2-xy$ also belongs to the universal
Gröbner basis and thus to some reduced Gröbner basis $\mathcal{G}_\succ(I)$ with $y^2\succ xy$. Pick any
 weight vector $\omega=(\omega_1,\omega_2)\in\R_+^2$ that represents $\succ$. 
Clearly, $\omega_2>\omega_1$ and thus, $xy\succ x^2$. But as $xy-x^2\in I$, $xy$ cannot be a standard monomial,
which is a contradiction to our assumption that $y^2-xy$ belongs to any reduced Gröbner basis.
\end{example}

\begin{lemma}\label{lem:bothbinomials}
 The primitive binomial $\bx^u-\bx^{u'}$ in $I_+(\Co)$ with $u,u'\neq\mathbf{0}$ belongs to the universal Gröbner basis if there is a 
non-negative vector $\omega\in\R_+^{n(q-1)}$ such that
\begin{align*}
 \omega\cdot u'\leq\omega\cdot u<\omega\cdot v\quad\forall v\in\Cos(u)\setminus\{u,u'\}.
\end{align*}
\end{lemma}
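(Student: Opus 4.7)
The plan is to prove the lemma directly, by exhibiting a monomial order $\succ_\omega$ for which $\bx^u-\bx^{u'}$ lies in the reduced Gröbner basis $\mathcal{G}_{\succ_\omega}(I_+(\Co))$; since $\mathcal{U}_+(\Co)$ is by definition the union of all reduced Gröbner bases, this places $\bx^u-\bx^{u'}$ in $\mathcal{U}_+(\Co)$. Concretely, I would pick any monomial order $\succ'$ with $\bx^u\succ'\bx^{u'}$ (possible because $u\neq u'$) and refine the hypothesized weight vector $\omega$ to $\succ_\omega$ in the usual way, comparing $\omega$-degree first and breaking ties by $\succ'$. The hypothesis $\omega\cdot u'\leq\omega\cdot u$ combined with the tie-breaker then gives $\bx^u\succ_\omega \bx^{u'}$ immediately, whether the weight inequality is strict or not.

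It remains to verify the two criteria for $\bx^u-\bx^{u'}$ to belong to $\mathcal{G}_{\succ_\omega}(I_+(\Co))$: that $\bx^{u'}$ is a standard monomial and that $\bx^u$ is a minimal generator of $\lt_{\succ_\omega}(I_+(\Co))$. For the first, any $\bx^w$ lying in the same coset as $\bx^{u'}$ must satisfy $\bx^w\succ_\omega \bx^{u'}$, where $w$ ranges over $\Cos(u')\setminus\{u'\}=\Cos(u)\setminus\{u'\}$. The case $w=u$ is handled by the tie-breaker, and for every other $w\in\Cos(u)\setminus\{u,u'\}$ the hypothesis provides the strict inequality $\omega\cdot w>\omega\cdot u\geq\omega\cdot u'$, which forces $\bx^w\succ_\omega\bx^{u'}$ unconditionally.

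For the minimality of $\bx^u$, I would exhibit each immediate divisor $\bx^{u-\mathbf{e}_{ij}}$ with $ij\in\supp(u)$ as a standard monomial. The key observation is that, by $\ZZ$-linearity of $\blacktriangledown$, the translation $w\mapsto w+\mathbf{e}_{ij}$ sets up a bijection between $\Cos(u-\mathbf{e}_{ij})\setminus\{u-\mathbf{e}_{ij}\}$ and $\{v\in\Cos(u)\setminus\{u\}:v_{ij}\geq 1\}$, and translates the inequality $\omega\cdot w>\omega\cdot(u-\mathbf{e}_{ij})$ into $\omega\cdot v>\omega\cdot u$. Because the primitive binomial $\bx^u-\bx^{u'}$ is pure, the supports of $u$ and $u'$ are disjoint, so $u'_{ij}=0$ and $v=u'$ is automatically excluded from the bijected image; for every other relevant $v\in\Cos(u)\setminus\{u,u'\}$ the desired strict inequality is exactly the hypothesis of the lemma.

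The delicate point is the boundary case $\omega\cdot u'=\omega\cdot u$, where $\omega$ itself sits on the boundary of the cone $C[u,u']$ of Prop.~\ref{prop:Conenonempty}; the virtue of the monomial-order approach is that the tie-breaker $\succ'$ dispatches this case at the level of the term order, so no perturbation of $\omega$ (which would otherwise have to be controlled uniformly over the infinite set $\Cos(u)$) is needed. The disjoint-support condition coming from pureness is what makes this free perturbation legal, and it is the only structural input beyond the hypothesis.
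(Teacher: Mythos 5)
Your proof is correct, and it takes a mildly but genuinely different route from the paper's. The paper argues by cases: when $\omega\cdot u'<\omega\cdot u$ it shows $\omega\in\mathcal{M}(u')\cap\bigcap_{ij\in\supp(u)}\mathcal{M}(u-\mathbf{e}_{ij})$ and then invokes Lem.~\ref{lem:compCone} and Prop.~\ref{prop:Conenonempty} (non-emptiness of the cone $C[u,u']$), reserving a direct reduced-Gröbner-basis argument with a tie-breaking order only for the boundary case $\omega\cdot u'=\omega\cdot u$. You run that second kind of argument uniformly: refine $\omega$ by a tie-breaker with $\bx^u\succ'\bx^{u'}$ and verify directly that $\bx^{u'}$ is standard (it is the $\succ_\omega$-minimum of its coset, the case $w=u$ being settled by the tie-breaker) and that $\bx^u$ is a minimal generator of $\lt_{\succ_\omega}(I_+(\Co))$, the immediate divisors being handled via the translation $w\mapsto w+\mathbf{e}_{ij}$ and the disjointness of $\supp(u)$ and $\supp(u')$ --- which is exactly the content of the paper's opening claim, stated there in contrapositive form. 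What your version buys is a single unified argument that never needs the cone machinery; what the paper's version buys is the explicit link to the sets $\mathcal{M}(\cdot)$ on which Alg.~\ref{alg:uniGB} is based. Two small points to tighten: the existence of a monomial order with $\bx^u\succ'\bx^{u'}$ does not follow from $u\neq u'$ alone (if $\bx^u$ properly divided $\bx^{u'}$, no such order would exist); here it follows from pureness together with $u,u'\neq\mathbf{0}$, which make the two monomials incomparable under divisibility. Also, the implication ``coset-minimal $\Rightarrow$ standard monomial'' silently uses that $I_+(\Co)$ is a lattice ideal (so that reduction of a monomial by binomials stays inside its coset); this is the same fact the paper uses when it reads membership of $\omega$ in $\mathcal{M}(u)$ as a standardness condition, so it is legitimate in context, but worth stating.
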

\begin{proof}
Suppose that such a vector $\omega\in\R_+^{n(q-1)}$ exists.

Claim that
$\omega\in\bigcap_{ij\in\supp(u)}\mathcal{M}(u-\mathbf{e}_{ij})$, i.e., every proper divisor of the monomial $\bx^{u}$ is standard w.r.t.\ the weight vector $\omega$. Indeed, if for any $ij\in\supp(u)$ holds
$\omega\notin\mathcal{M}(u-\mathbf{e}_{ij})$, then there has to be 
a $v\in\Cos(u-\mathbf{e}_{ij})\setminus\{u-\mathbf{e}_{ij}\}$ such that
$\omega\cdot(u-\mathbf{e}_{ij})\geq\omega\cdot v$. This implies 
$\omega\cdot (v+\mathbf{e}_{ij})\leq \omega\cdot u$ with $v+\mathbf{e}_{ij}\in\Cos(u)\setminus\{u\}$.
By the premise we conclude that $v+\mathbf{e}_{ij}=u'$. But then the binomial $\bx^{u}-\bx^{u'}$ is not pure, which is a contradiction to its primitiveness. This proves the claim.

We consider first the case that $\omega\cdot u'<\omega\cdot u$. Then by the definition, 
$\omega\in\mathcal{M}(u')$
and the result then follows  by Prop.~\ref{prop:Conenonempty} and Lem.~\ref{lem:compCone}. 

Finally, consider the case $\omega\cdot u'=\omega\cdot u$. Let $\succ$ be any monomial order such that
$\{x_{ij}\left| ij\in\supp(u)\right.\}\succ\{x_{ij}\left| ij\in\supp(u')\right.\}$. Therefore, $\bx^u\succ_\omega\bx^{u'}$ and since every proper divisor of $\bx^u$ is standard, we see that this monomial is actually a minimal generator in $\lt_{\succ_\omega}(I_+(\Co))$. Additionally, $\bx^{u'}$ is a standard monomial w.r.t.\ $\succ_\omega$ because $\omega\cdot u'<\omega\cdot v$ for all
$v\in\Cos(u)\setminus\{u,u'\}$. This proves that $\bx^{u}-\bx^{u'}\in\mathcal{G}_\omega(I_+(\Co))$.
\end{proof}

\begin{proposition}\label{prop:bothB}
If the binomial $\bx^u-\bx^{u'}$ with $u,u'\neq\mathbf{0}$ belongs to the universal Gröbner basis for a
generalized code ideal, then the binomial $\bx^{u'}-\bx^u$ also belongs to the universal Gröbner basis.
\end{proposition}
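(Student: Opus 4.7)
The plan is to apply Lemma~\ref{lem:bothbinomials} to the binomial $\bx^{u'}-\bx^u$: it suffices to exhibit a non-negative weight vector $\omega\in\R_+^{n(q-1)}$ with
\[
\omega\cdot u\le\omega\cdot u'<\omega\cdot v\quad\text{for every }v\in\Cos(u')\setminus\{u,u'\}.
\]
By the hypothesis and Prop.~\ref{prop:Conenonempty}, $C[u,u']\ne\emptyset$, so I may fix some $\omega_0\in C[u,u']$. Lemma~\ref{lem:compCone} then gives $\omega_0\in\mathcal{M}(u')$, hence $\omega_0\cdot u'<\omega_0\cdot v$ strictly for every $v\in\Cos(u')\setminus\{u'\}$; in particular the second required inequality is already satisfied at $\omega_0$. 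Only the first one fails, since by definition of $C[u,u']$ we have $\omega_0\cdot u>\omega_0\cdot u'$.

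Since the primitive binomial $\bx^u-\bx^{u'}$ is pure, the supports of $u$ and $u'$ are disjoint; hence setting $\omega:=\omega_0+N\,u'$ with $N>0$ keeps $\omega\in\R_+^{n(q-1)}$, leaves $\omega\cdot u=\omega_0\cdot u$ unchanged, and lifts $\omega\cdot u'$ to $\omega_0\cdot u'+N\|u'\|^2$. Choosing $N\ge(\omega_0\cdot u-\omega_0\cdot u')/\|u'\|^2$ enforces the first required condition. The second condition rewrites as $N(\|u'\|^2-u'\cdot v)<\omega_0\cdot v-\omega_0\cdot u'$, which is automatic whenever $u'\cdot v\ge\|u'\|^2$ and otherwise constrains $N$ to stay strictly below $t_v:=(\omega_0\cdot v-\omega_0\cdot u')/(\|u'\|^2-u'\cdot v)>0$. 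A direct algebraic check shows that the lower bound $(\omega_0\cdot u-\omega_0\cdot u')/\|u'\|^2$ is strictly below $t_v$ whenever $\omega_0\cdot u<\omega_0\cdot v$; and by Lemma~\ref{lem:compCone} the membership $\omega_0\in\bigcap_{ij\in\supp(u)}\mathcal{M}(u-\mathbf{e}_{ij})$ forces precisely this strict inequality for every $v\in\Cos(u)\setminus\{u\}$ whose support meets $\supp(u)$, by the same argument used in the proof of Lemma~\ref{lem:bothbinomials}.

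The main obstacle is therefore the residual family of $v\in\Cos(u)\setminus\{u,u'\}$ whose support is disjoint from $\supp(u)$ and for which $\omega_0\cdot v\le\omega_0\cdot u$, because then neither the $\mathcal{M}(u-\mathbf{e}_{ij})$-inequalities nor the simple mass condition rule out the failure of compatibility. I would handle these either by perturbing $\omega_0$ slightly within $C[u,u']$ to break borderline equalities before running the construction, or by combining the increase on $\supp(u')$ with a compensating small decrease on $\supp(u)$ (kept in the non-negative orthant via the bound $t\le\min_{ij\in\supp(u)}\omega_{0,ij}/u_{ij}$) and invoking the primitiveness of $\bx^{u'}-\bx^v$ in $\Gr_+(\Co)$ to control the remaining inequalities. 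Once such an $\omega$ has been exhibited, Lemma~\ref{lem:bothbinomials} yields $\bx^{u'}-\bx^u\in\mathcal{U}_+(\Co)$.
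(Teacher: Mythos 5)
Your overall strategy---reduce to Lemma~\ref{lem:bothbinomials} by exhibiting a weight vector under which $u$ and $u'$ (at worst) tie and beat every other coset element---is the same as the paper's, but your actual construction, the one-parameter deformation $\omega=\omega_0+N\,u'$, does not close, and the place where it breaks is exactly the ``residual family'' you flag yourself. Take $v\in\Cos(u)\setminus\{u,u'\}$ with $\supp(v)\cap\supp(u)=\emptyset$, $u'\cdot v=0$ and $\omega_0\cdot u'<\omega_0\cdot v\le\omega_0\cdot u$. Nothing excludes such $v$: coset elements with $\omega_0\cdot v<\omega_0\cdot u$ are automatically disjoint in support from $\supp(u)$ (since $\bx^u$ is a minimal generator of the leading ideal), so the memberships $\omega_0\in\mathcal{M}(u-\mathbf{e}_{ij})$ say nothing about them, and they may also avoid $\supp(u')$. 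For such a $v$ your two requirements are incompatible for \emph{every} $N$: the first forces $N\|u'\|^2\ge\omega_0\cdot u-\omega_0\cdot u'$, while the second reads $\omega_0\cdot u'+N\|u'\|^2<\omega_0\cdot v\le\omega_0\cdot u$, i.e.\ $N\|u'\|^2<\omega_0\cdot u-\omega_0\cdot u'$. So the window for $N$ is empty; this is a robust, open obstruction, not a borderline equality, so your first proposed repair (a small perturbation of $\omega_0$ inside $C[u,u']$) cannot remove it, and your appeal to ``primitiveness of $\bx^{u'}-\bx^v$'' is unfounded, since that binomial need not be primitive. (A secondary issue: even where each window $[L,t_v)$ is nonempty, you need one $N$ valid simultaneously for the infinitely many $v\in\Cos(u')$, which requires a uniformity argument you do not give.)

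The missing idea, which the paper's proof supplies, is that the weight must also be \emph{lowered} on $\supp(u)$, not only raised on $\supp(u')$. The paper takes a strictly positive $\omega$ representing the order, sets $\omega'$ equal to $\omega$ off $\supp(u)$ and to $0$ on $\supp(u)$, and forms $\omega''=(\omega\cdot(u-u'))\,\omega'-(\omega'\cdot(u-u'))\,\omega$, which is non-negative and satisfies $\omega''\cdot u=\omega''\cdot u'$. The problematic $v$ above are then handled because for them $\omega'\cdot v=\omega\cdot v>\omega\cdot u'$ (as $\bx^{u'}$ is standard), which gives $\omega''\cdot v>\omega''\cdot u$, while the $v$ with $\omega\cdot v\ge\omega\cdot u$ are dealt with by a direct estimate. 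Your second, vaguer suggestion (a compensating decrease on $\supp(u)$) points in this direction, but the decrease cannot in general be small and you have not carried it out; as written, the proposal has a genuine gap precisely on the case that matters.
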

\begin{proof}
 Suppose that $\bx^u-\bx^{u'}$ belongs to the universal Gröbner basis with leading term $\bx^u$. 
Hence, this binomial is pure and primitive and there is a monomial order $\succ$ 
such that $\bx^u-\bx^{u'}\in\mathcal{G}_\succ(I_+(\Co))$ and 
$\lt_\succ(\bx^u-\bx^{u'})=\bx^u$. 

By Lem.~\ref{lem:weightsucc} there is a weight vector $\omega\in\R_+^{n(q-1)}$
that represents $\succ$. Suppose all coordinates of $\omega$ are strictly positive (otherwise $\omega$
can be replaced by a nearby vector from the same Gröbner cone).
In particular, $\omega\cdot u>\omega\cdot u'$. 

Define the weight vector $\omega'\in\R_+^{n(q-1)}$ as follows: put $\omega'_{ij}=0$ for $ij\in\supp(u)$ and
$\omega'_{ij}=\omega_{ij}$ otherwise. Hence, $0=\omega'\cdot u<\omega'\cdot u'$.
Based on that define another weight vector
\begin{align*}
 \omega''=(\omega\cdot (u-u'))\omega'-(\omega'\cdot (u-u'))\omega.
\end{align*}
Note that $\omega''$ is non-negative since $\omega'\cdot(u-u')$ is a negative scalar and $\omega\cdot(u-u')$
is a positive scalar. By definition $\omega''\cdot (u-u')=0$ and so $\omega''\cdot u=\omega''\cdot u'$.

Claim that for all $v\in\Cos(u)\setminus\{u,u'\}$ holds $\omega''\cdot u<\omega''\cdot v$. Indeed, 
if $\omega\cdot v< \omega\cdot u$, then the binomial $\bx^u-\bx^v\in I_+(\Co)$ 
has leading term $\bx^u$. We conclude that $\supp(u)$ and $\supp(v)$ are disjoint because otherwise $\bx^u$
would have a proper divisor that belongs to $\lt_\succ(I_+(\Co))$.
This implies $\omega'\cdot v=\omega\cdot v$. Furthermore, $\omega\cdot v>\omega\cdot u'$ 
because $\bx^{u'}$ is a standard monomial. Hence,
$$\omega''\cdot v =\left((\omega-\omega')\cdot(u-u')\right)(\omega\cdot v)
>\left((\omega-\omega')\cdot(u-u')\right)(\omega\cdot u')=\omega''\cdot u'=\omega''\cdot u.$$
If $\omega\cdot v\geq\omega\cdot u$, then 
\begin{align*}
 \omega''\cdot v&=(\omega\cdot (u-u'))(\omega'\cdot v)-(\omega'\cdot (u-u'))(\omega\cdot v)\\
		&\geq (\omega\cdot (u-u'))(\omega'\cdot v)-(\omega'\cdot (u-u'))(\omega\cdot u)\\
		&>-(\omega'\cdot (u-u'))(\omega\cdot u)=\omega''\cdot u.
\end{align*}
This proves the claim. And in particular, we obtain that
$\omega''\cdot u=\omega''\cdot u'<\omega''\cdot v$ for all $v\in\Cos(u)\setminus\{u,u'\}$
and thus by Lem.~\ref{lem:bothbinomials}, $\bx^{u'}-\bx^u\in\mathcal{U}_+(\Co)$.
\end{proof}

A method for computing the universal Gröbner basis for code ideals
from its Graver basis is given by Alg.~\ref{alg:uniGB}.
This procedure is based on the similar algorithm for toric ideals~\cite[Algorithm 7.6]{sturmfels}
and its correctness follows from
Prop.~\ref{prop:Conenonempty}, Lem.~\ref{lem:compCone} and eq.~(\ref{eq:MvAlternDescription}).

The proposed algorithm makes use of the following subroutines:
\begin{itemize}
 \item $\texttt{swap}(a,b)$ applied to the variables $a$ and $b$ swaps the contents of these variables.
 \item $\bx^u\mid\bx^v$ applied to the monomials $\bx^{u}$ and $\bx^{v}$ returns $1$ if the monomial $\bx^u$ divides
	the monomial $\bx^v$ and $0$ otherwise.
  \item $\texttt{addRow}(A,a)$ applied to an $m\times n$ integer matrix $A$ and an integer row vector $a$ of size $m$
	returns the matrix $A$ extended by the additional row $a$.
 \item $\texttt{break}$ quits the current \texttt{for}-loop.
 \item $\texttt{empty}(A)$  applied to an $m\times n$ integer matrix $A$ tests whether the open cone defined 
	by $\{\omega\in\R_+^{n}\mid A\omega> 0\}$ is empty and returns in this case $1$ or otherwise $0$.
\end{itemize}

\begin{algorithm}[htb]
\caption{Computation of the universal Gröbner basis}
\begin{algorithmic}[1] 
\REQUIRE Graver basis $\Gr_+(\Co)$
\ENSURE Universal Gröbner basis $\mathcal{U}_+(\Co)$
\STATE $\mathcal{U}_+(\Co)=\Gr_+(\Co);$
\STATE $A=[\:]$;
\FORALL{$\bx^u-\bx^{u'}\in\Gr_+(\Co)$}
\IF{$|\supp(u')|<|\supp(u)|$}
\STATE $\texttt{swap}(u,u')$;
\ENDIF
\FORALL{$\bx^v-\bx^{v'}\in\Gr_+(\Co)$}
\STATE 	$a_{11}=\bx^v \mid \bx^u$; 
\STATE $a_{12}=\bx^v\mid\bx^{u'}$;
\STATE 	$a_{21}=\bx^{v'} \mid \bx^u$; 
\STATE $a_{22}=\bx^{v'}\mid\bx^{u'}$; 
\IF{$(a_{11}\wedge a_{12})\vee(a_{21}\wedge a_{22})$}
\STATE $\mathcal{U}_+(\Co)=\mathcal{U}_+(\Co)\setminus\{\bx^u-\bx^{u'}\};$
\STATE $\texttt{break};$
\ENDIF
\FORALL{$ij\in\supp(u)$}
\IF{$\bx^v\mid\bx^{u-\mathbf{e}_{ij}}$}
\STATE $A=\texttt{addRow}(A,v'-v)$;
\ELSIF{$\bx^{v'}\mid\bx^{u-\mathbf{e}_{ij}}$}
\STATE $A=\texttt{addRow}(A,v-v')$;
\ENDIF 
\ENDFOR

\IF{$a_{12}$}
\STATE $A=\texttt{addRow}(A,v'-v)$;
\ELSIF{$a_{22}$}
\STATE $A=\texttt{addRow}(A,v-v')$;
\ENDIF
\ENDFOR
\IF{$\texttt{empty}(A)$}
\STATE  $\mathcal{U}_+(\Co)=\mathcal{U}_+(\Co)\setminus\{\bx^u-\bx^{u'}\};$
\ENDIF
\ENDFOR
\RETURN $\mathcal{U}_+(\Co)$
\end{algorithmic}
\label{alg:uniGB}
\end{algorithm}

\begin{remark}
 The run-time of Alg.~\ref{alg:uniGB} depends on the size of the Graver basis and is in the worst-case
 $O\left(|\Gr_+(\Co)|^2\right)$.  
\end{remark}


\bibliographystyle{plain}
\bibliography{refBook,refPaper}

\end{document}